\documentclass[11pt]{article}
\usepackage{amsthm, amsmath, amssymb, amsfonts, url, booktabs, float, tikz, setspace, fancyhdr, verbatim, chngcntr}
\usepackage[margin = 1in]{geometry}
\usepackage{enumitem}

\usepackage{hyperref}
\usepackage{tikz}

\counterwithin{figure}{section}

% ------   Theorem Styles -------
\newtheorem{theorem}{Theorem}[section]
\newtheorem{proposition}[theorem]{Proposition}
\newtheorem{lemma}[theorem]{Lemma}
\newtheorem{corollary}[theorem]{Corollary}

\newtheorem*{thm}{Theorem}

\theoremstyle{definition}
\newtheorem{definition}[theorem]{Definition}

\theoremstyle{remark}

% ----  Text Styles ----

% -----  Named Operators  ------

% ----- Delimiters ----
\newcommand{\abs}[1]{\left\lvert#1\right\rvert}

% ------ Operator symbols ------

\newcommand{\rprod}[1]{\ominus_{#1}}
\newcommand{\rrprod}[1]{\odot_{#1}}

\newcommand{\PP}{\mathbb{P}}

\newcommand{\mc}[1]{\mathcal{#1}}

\newcommand{\eps}{\varepsilon}

\newcommand{\fred}{F_{red}}
\newcommand{\fblue}{F_{blue}}
\newcommand{\gred}{G_{red}}
\newcommand{\gblue}{G_{blue}}
\newcommand{\ggreen}{G_{green}}
\newcommand{\G}{\Gamma}
\newcommand{\Gred}{\Gamma_{red}}
\newcommand{\Gblue}{\Gamma_{blue}}
\newcommand{\Ggreen}{\Gamma_{green}}
\newcommand{\mred}{M_{red}}
\newcommand{\mblue}{M_{blue}}

% ------ Tikz ------

\tikzstyle{p}+=[fill=black, circle, minimum width = 1pt, inner sep =
1pt]
\tikzstyle{w}+=[fill=white, draw, circle, minimum width = 1pt, inner sep =
1.5pt]

\begin{document}

\title{Ramsey games near the critical threshold}

\author{David Conlon\thanks{Department of Mathematics, California Institute of Technology, Pasadena, CA 91125, USA.
E-mail: {\tt
dconlon@caltech.edu}. Research supported in part by ERC Starting Grant RanDM 676632.} \and 
Shagnik Das\thanks{Institut f\"ur Mathematik, Freie Universit\"at Berlin, 14195 Berlin, Germany.  E-mail: {\tt shagnik@mi.fu-berlin.de}.  Research supported by GIF grant G-1347-304.6/2016 and by the Deutsche Forschungsgemeinschaft (DFG) project 415310276.}\and
Joonkyung Lee\thanks{
Department of Mathematics, University College London, Gower Street, London WC1E 6BT, UK.
E-mail: {\tt
joonkyung.lee@ucl.ac.uk}. Research supported in part by ERC Consolidator Grant PEPCo 724903.}\and
Tam\'as M\'esz\'aros\thanks{Institut f\"ur Mathematik, Freie Universit\"at Berlin, 14195 Berlin, Germany.  E-mail: {\tt tamas.meszaros@fu-berlin.de}.  Funded by the DRS Fellowship Program and the  Berlin Mathematics Research Center MATH+, Project ``Learning hypergraphs".}
}

\date{}

\maketitle

\begin{abstract}
A well-known result of R\"odl and Ruci\'nski states that for any graph $H$ there exists a constant $C$ such that if $p \geq C n^{- 1/m_2(H)}$, then the random graph $G_{n,p}$ is a.a.s.~$H$-Ramsey, that is, any $2$-colouring of its edges contains a monochromatic copy of $H$. Aside from a few simple exceptions, the corresponding $0$-statement also holds, that is, there exists $c>0$ such that whenever $p\leq cn^{-1/m_2(H)}$ the random graph $G_{n,p}$ is a.a.s.~not $H$-Ramsey.

We show that near this threshold, even when $G_{n,p}$ is not $H$-Ramsey, it is often extremely close to being $H$-Ramsey.  More precisely, we prove that for any constant $c > 0$ and any strictly $2$-balanced graph $H$, if $p \geq c n^{-1/m_2(H)}$, then the random graph $G_{n,p}$ a.a.s.~has the property that every $2$-edge-colouring without monochromatic copies of $H$ cannot be extended to an $H$-free colouring after $\omega(1)$ extra random edges are added. This generalises a result by Friedgut, Kohayakawa, R\"odl, Ruci\'nski and Tetali, who in 2002 proved the same statement for triangles, and addresses a question raised by those authors.  We also extend a result of theirs on the three-colour case and show that these theorems need not hold when $H$ is not strictly $2$-balanced.
\end{abstract}

\section{Introduction}

The study of sparse generalisations of combinatorial theorems has attracted considerable interest in recent years and there are now several general mechanisms~\cite{BMS15, CG16, ST15, S16} that allow one to prove that analogues of classical results such as Ramsey's theorem, Tur\'an's theorem and Szemer\'edi's theorem hold relative to sparse random graphs and sets of integers.
Much of this work is based, in one way or another, on the beautiful random Ramsey theorem of R\"odl and Ruci\'nski~\cite{RR93, RR95} from 1995. This seminal result gives a complete answer to the question of when the binomial random graph $G_{n,p}$ is \emph{$(H, r)$-Ramsey}, that is, has the property that any $r$-colouring of its edges contains a monochromatic copy of the graph $H$.

To state the R\"odl--Ruci\'nski theorem precisely, we need some notation. For a graph $H$, we write $d_2(H) = 0$ if $H$ has no edges, $d_2(H) = 1/2$ when $H = K_2$ and $d_2(H) = (e(H)-1)/(v(H)-2)$ in the general case. We then write $m_2(H) = \max_{H' \subseteq H} d_2(H')$ and call this quantity the \emph{$2$-density} of $H$. Though we will not use these definitions immediately, we also say that $H$ is \emph{$2$-balanced} if $m_2(H') \leq m_2(H)$ and \emph{strictly $2$-balanced} if $m_2(H') < m_2(H)$ for all proper subgraphs $H'$ of $H$.

\begin{thm}(R\"odl--Ruci\'nski, 1995) \label{thm:rodlrucinski}
Let $r \geq 2$ be a positive integer and let $H$ be a graph that is not a forest consisting of stars and paths of length $3$. Then there are positive constants $c$ and $C$ such that
\[\lim_{n\rightarrow \infty} \PP[G_{n,p} \mbox{ is $(H, r)$-Ramsey}] = \left\{ \begin{array}{ll}
         0 & \mbox{if $p < c n^{-1/m_2(H)}$},\\
         1 & \mbox{if $p > C n^{-1/m_2(H)}$}.\end{array} \right.\]
\end{thm}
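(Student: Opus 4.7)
The plan is to prove the 0-statement and 1-statement by rather different arguments, both anchored at the critical density $n^{-1/m_2(H)}$. The exponent $m_2(H)$ is calibrated so that, at $p = \Theta(n^{-1/m_2(H)})$, the expected number of copies of $H$ in $G_{n,p}$ is of the same order as its number of edges, making every edge of $G_{n,p}$ lie in $\Theta(1)$ copies of $H$ on average; the question becomes whether all these copies can be broken simultaneously by an $r$-edge-colouring.

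For the 0-statement I would construct, a.a.s., an $r$-edge-colouring of $G_{n,p}$ with no monochromatic $H$. Let $\mathcal H$ be the auxiliary hypergraph on vertex set $E(G_{n,p})$ whose hyperedges are the edge-sets of copies of $H$. Using Markov's inequality applied to counts of extensions of a fixed edge to a copy of $H$, together with the bound $d_2(H') \leq m_2(H)$ for every $H' \subseteq H$, one checks that for $p < c n^{-1/m_2(H)}$ with $c$ small, a.a.s.\ only $o(n^2 p)$ edges of $G_{n,p}$ lie in more than a constant number of copies of $H$. After removing these ``bad'' edges (and all copies of $H$ they touch), the residual hypergraph has bounded maximum degree, and a uniformly random $r$-colouring of its vertices satisfies the Lov\'asz Local Lemma hypotheses with room to spare; the removed edges can then be coloured arbitrarily, since no remaining $H$-copy sees them. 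The forests excluded from the theorem are precisely those with $m_2(H) \leq 1$, where copies of $H$ appear already at constant density and no threshold of the prescribed form exists.

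For the 1-statement I would invoke the hypergraph container method, which supplies a family of at most $2^{o(n^2)}$ container graphs on $[n]$, each containing every $H$-free subgraph of $K_n$ and having at most $\operatorname{ex}(n,H) + o(n^2)$ edges. If $G_{n,p}$ admits an $H$-free $r$-edge-colouring, then its colour classes are $H$-free, so $G_{n,p} \subseteq C_1 \cup \dots \cup C_r$ for some $r$-tuple of containers. A Janson-type inequality bounds the probability of this inclusion by $\exp\bigl(-\omega(n^2 p)\bigr)$ once $p > C n^{-1/m_2(H)}$ with $C$ large, and a union bound over the $2^{o(n^2)}$ $r$-tuples of containers closes the argument. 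Historically R\"odl and Ruci\'nski proceeded via a sparse regularity lemma and a delicate deletion argument for controlling $H$-copies across regular pairs, but the container-based route packages the same balance between entropy and supersaturation more cleanly.

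The main obstacle is the 1-statement. One must rule out, a.a.s., an exponentially large family of highly correlated potential colourings of $G_{n,p}$, and this demands a sharp supersaturation estimate for $H$ in sparse random subgraphs -- in effect the content of the K{\L}R conjecture. Whether accessed through sparse regularity, hypergraph containers, or the original R\"odl--Ruci\'nski machinery, calibrating the entropy cost of specifying a colouring against the scarcity of $H$-free subgraphs at density $n^{-1/m_2(H)}$ is the technical heart of the theorem, and tightening the constants $c, C$ to match is where all the delicate work occurs.
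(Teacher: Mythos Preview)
The paper does not prove this theorem at all; it is stated as a background result and attributed to R\"odl and Ruci\'nski~\cite{RR93,RR95}, so there is no proof in the paper to compare your proposal against.

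That said, your sketch has a genuine gap in the $0$-statement. You remove the ``bad'' edges lying in many copies of $H$, discard all copies of $H$ touching them from the auxiliary hypergraph, colour the residual hypergraph via the Local Lemma, and then colour the bad edges arbitrarily ``since no remaining $H$-copy sees them.'' But the copies of $H$ you discarded from the hypergraph still exist in $G_{n,p}$: if a bad edge is coloured red and the other edges of some discarded copy through it all received red in the LLL colouring, you have created a monochromatic $H$. The actual R\"odl--Ruci\'nski $0$-statement (and its later streamlinings) does not proceed by a global deletion of this sort; it requires a structural analysis of the components of the hypergraph of $H$-copies at $p = cn^{-1/m_2(H)}$, showing that for small enough $c$ each component is a.a.s.\ sparse in a precise sense that permits an explicit $2$-colouring. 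The excluded forests of stars and $P_3$'s are exactly the cases where this structural argument breaks down, not merely where $m_2(H) \le 1$.

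Your container outline for the $1$-statement is closer to a valid modern proof (essentially Nenadov--Steger~\cite{NS16}), but the step ``a Janson-type inequality bounds the probability of $G_{n,p} \subseteq C_1 \cup \dots \cup C_r$'' is not the right mechanism: the union of $r$ containers can certainly cover all of $K_n$ edgewise. What one actually uses is that each container has only $o(n^{v(H)})$ copies of $H$, combined with a robust Ramsey supersaturation statement guaranteeing $\Omega(n^{v(H)})$ copies of $H$ lying entirely outside any fixed $r$-tuple of containers; Janson is then applied to those copies. Your final paragraph correctly identifies this supersaturation-versus-entropy balance as the heart of the matter, but the body of the sketch does not implement it.
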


There has been much work extending this result. We will not attempt an exhaustive survey, but refer the interested reader instead to some of the latest progress on hypergraphs~\cite{GNPSST17}, the asymmetric case~\cite{MNS19}, establishing sharp thresholds~\cite{SS18} and the equivalent problem in settings other than the binomial random graph~\cite{DT19, P19}. Our particular concern here will be with the following surprising result of Friedgut, Kohayakawa, R\"odl, Ruci\'nski and Tetali~\cite{FKRRT03} regarding two-round Ramsey games against a random builder. 

\begin{thm}(Friedgut--Kohayakawa--R\"odl--Ruci\'nski--Tetali, 2003)
Let $c > 0$ be fixed and, for $p = cn^{-1/2}$, let $G = G_{n,p}$. Then, with high probability, the following statements hold:
\begin{itemize}
	\item[(a)] Let $\varphi_2$ be an arbitrary monochromatic-$K_3$-free $2$-edge-colouring of $G$.  If $q_2 = \omega(n^{-2})$, then, with high probability, $\varphi_2$ cannot be extended to a monochromatic-$K_3$-free $2$-edge-colouring of $G \cup G_{n,q_2}$.
	\item[(b)] Let $\varphi_3$ be an arbitrary monochromatic-$K_3$-free $3$-edge-colouring of $G$.  If $q_3 = \omega(n^{-1})$, then, with high probability, $\varphi_3$ cannot be extended to a monochromatic-$K_3$-free $3$-edge-colouring of $G \cup G_{n,q_3}$.
\end{itemize}
\end{thm}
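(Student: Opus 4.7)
The plan is to reduce each part to a structural claim on valid colourings of $G_{n,p}$ and then conclude by a concentration argument. For part (a), call a non-edge $\{u,w\}$ of $G = G_{n,p}$ \emph{doubly-blocked} by a $K_3$-free $2$-colouring $\varphi_2$ if there exist vertices $v_R, v_B$ with $uv_R, v_Rw$ both red and $uv_B, v_Bw$ both blue; such a pair admits no extension of $\varphi_2$. The structural claim to prove is that, a.a.s.\ over $G$, every $K_3$-free $2$-colouring of $E(G)$ admits $\Omega(n^2)$ doubly-blocked non-edges. Given this, for any such $\varphi_2$, the number of edges of $G_{n,q_2}$ falling into the doubly-blocked set is a binomial random variable with mean $\Omega(n^2) \cdot q_2 = \omega(1)$, hence positive a.a.s.\ by a Chernoff bound, preventing extension. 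Part (b) is analogous with \emph{triply-blocked} non-edges, and since $q_3 = \omega(n^{-1})$ it suffices to produce $\Omega(n)$ triply-blocked non-edges per colouring.

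For the structural claim in (a), I apply the sparse regularity method to the red and blue edge classes simultaneously. Since $p = cn^{-1/2}$ meets the K\L R threshold for $K_3$, the sparse triangle counting lemma holds a.a.s.\ on $G_{n,p}$. For any valid $\varphi_2$, take a common $\eps$-regular partition $V(G) = V_1 \cup \cdots \cup V_t$ in which every bipartite pair is $(\eps, p)$-regular in each of the two colours. Call a reduced-graph edge $ij$ \emph{red-heavy} (resp.\ \emph{blue-heavy}) if the relative density on $(V_i, V_j)$ in red (resp.\ blue) exceeds some small threshold $\eta > 0$, and \emph{bichromatic} if it is both. The red-heavy and blue-heavy subgraphs of the reduced graph must each be triangle-free, as otherwise the sparse counting lemma would yield an actual monochromatic $K_3$ in $G$. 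Since $R(3,3) = 6$ and the reduced graph is essentially $K_t$ (almost all pairs have relative density close to $1$), these subgraphs cannot cover $K_t$ without overlap; a stability argument (using Erd\H{o}s--Simonovits stability for triangle-free graphs, together with the fact that two edge-disjoint extremal triangle-free subgraphs of $K_t$ miss a constant fraction of edges) forces $\Omega(t^2)$ bichromatic pairs. On each bichromatic pair $(V_i, V_j)$, the sparse counting lemma implies that typical $u, w \in V_i$ share both common red and common blue neighbours in $V_j$, so $\{u, w\}$ is doubly-blocked; summing yields $\Omega(n^2)$ doubly-blocked non-edges.

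Part (b) is analogous, using $R(3,3,3) = 17$ in the three-colour sparse regularity setup: a positive fraction of reduced pairs must be \emph{trichromatic} (density above $\eta p$ in all three colours), yielding $\Omega(n^2)$ triply-blocked non-edges, well above the $\Omega(n)$ threshold required.

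The main obstacle is the stability step: a priori the reduced graph might admit few bichromatic (or trichromatic) pairs if many pairs are nearly monochromatic. Ruling this out requires a sparse stability theorem for pairs of triangle-free graphs covering $G_{n,p}$: any near-extremal valid $2$-colouring must be close to a balanced bipartite structure, from which $\Omega(n^2)$ doubly-blocked non-edges can be extracted directly. Making this stability argument quantitative --- and checking that it generalises cleanly to the three-colour case via $R(3,3,3)$ --- is the main technical hurdle.
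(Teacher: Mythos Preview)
Your central structural claim --- that the red-heavy and blue-heavy subgraphs of the reduced graph are each triangle-free --- is false in general, and this breaks the argument.  You justify it by saying that $p = cn^{-1/2}$ ``meets the K\L R threshold for $K_3$'', but the sparse counting lemma (Theorem~\ref{thm:sparsecount} in the paper) requires $p \ge C n^{-1/m_2(K_3)}$ for a \emph{sufficiently large} constant $C$ depending on the regularity and density parameters.  The whole point of the theorem is that $c>0$ may be arbitrarily small --- we are deliberately below the Ramsey threshold --- so for small $c$ you cannot invoke K\L R to embed a monochromatic triangle from a monochromatic reduced triangle.  Indeed, the paper's own argument (Corollary~\ref{cor:twocliques} with $t = e(H)(v(H)-2)+1 = 4$) shows that for \emph{every} valid $2$-colouring the reduced graph contains a vertex lying in both a red $K_4$ and a blue $K_4$; in particular $\Gamma_{red}$ and $\Gamma_{blue}$ each contain triangles, directly contradicting your premise.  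The stability/Ramsey step you flag as the ``main obstacle'' never gets off the ground, because its hypothesis fails.

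The paper circumvents this by never trying to count copies of $K_3$ itself.  Instead it applies the counting lemma to the reduced edge-rooted product $K_3 \rrprod{2} (K_3,h)$, which has $m_2$ strictly below $m_2(K_3)=2$ (Lemma~\ref{lem:productdensity}(c)); hence $p = cn^{-1/2}$ is eventually far above $Cn^{-1/m_2(K_3 \rrprod{2} (K_3,h))}$ for any fixed $c>0$, and Theorem~\ref{thm:sparsecount} legitimately applies.  The monochromatic $K_4$'s in the reduced graph are then used not as a contradiction but as the scaffolding onto which this sparser product is mapped, yielding many green-forced pairs (your ``doubly-blocked'' pairs) via Proposition~\ref{prop:colourforced}.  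The same mechanism handles three colours: one finds $\chi$-forced copies of $K_3$, not merely $\chi$-forced edges, which is what makes the bound $q_3 = \omega(n^{-1})$ rather than $\omega(n^{-2})$ the right threshold.
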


When $H = K_3$, the R\"odl--Ruci\'nski theorem implies that if $p = C n^{-1/2}$ for some sufficiently large $C$, then every $2$-edge-colouring contains a monochromatic triangle. Part (a) of the theorem above says that for any $c > 0$, no matter how small, if $p = c n^{-1/2}$, then, even though there are $2$-edge-colourings of $G_{n,p}$ containing no monochromatic $K_3$, no such colouring can be extended to a monochromatic-$K_3$-free $2$-edge-colouring after $\omega(1)$ extra random edges are added. One interpretation of this result is that for any $c > 0$ the random graph $G_{n,p}$ with $p = c n^{-1/2}$ is, with high probability, already extremely close to being $(K_3, 2)$-Ramsey. Part (b) gives a similar result for $3$-edge-colourings, though in this case $\omega(n)$ extra edges may be needed in the second round of colouring to guarantee a monochromatic triangle.

Addressing a problem raised by Friedgut, Kohayakawa, R\"odl, Ruci\'nski and Tetali~\cite{FKRRT03}, our main result says that a similar statement holds for all graphs $H$ containing an edge $h$ for which $m_2(H \setminus h) < m_2(H)$. In particular, the result applies when $H$ is strictly $2$-balanced, since any edge $h$ works in this case.

\begin{theorem} \label{thm:main}
Let $H$ be a graph and suppose that there is some edge $h \in E(H)$ whose removal decreases the $2$-density, that is, $m_2(H \setminus h) < m_2(H)$.  Let $c > 0$ be fixed and, for $p = c n^{-1/m_2(H)}$, let $G = G_{n,p}$.  Then, with high probability, the following statements hold:
\begin{itemize}
	\item[(a)] Let $\varphi_2$ be an arbitrary monochromatic-$H$-free $2$-edge-colouring of $G$.  If $q_2 = \omega(n^{-2})$, then, with high probability, $\varphi_2$ cannot be extended to a monochromatic-$H$-free $2$-edge-colouring of $G \cup G_{n,q_2}$.
	\item[(b)] Let $\varphi_3$ be an arbitrary monochromatic-$H$-free $3$-edge-colouring of $G$.  If $q_3 = \omega(n^{-1/m(H)})$, then, with high probability, $\varphi_3$ cannot be extended to a monochromatic-$H$-free $3$-edge-colouring of $G \cup G_{n,q_3}$.
\end{itemize}
\end{theorem}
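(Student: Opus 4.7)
Our plan is to locate, inside every valid monochromatic-$H$-free 2-colouring $\varphi$ of $G=G_{n,p}$, a large set of \emph{stuck pairs} $\{u,v\}$: pairs for which $G$ contains both an all-red and an all-blue copy of $H\setminus h$ with missing edge at $\{u,v\}$. Adding a stuck pair to $G$ forces both colour choices to create a monochromatic $H$, so $\varphi$ cannot extend to it. Part (a) thus reduces to showing that with high probability over $G_{n,p}$, every valid $\varphi$ admits $\Omega(n^2)$ stuck pairs, and that, for $q_2 = \omega(n^{-2})$, the random graph $G_{n,q_2}$ a.a.s.\ hits this set simultaneously for all $\varphi$; the latter is handled by reducing the collection of $H$-free colourings to a sub-exponential family of ``typical'' colourings via a containers-style argument, after which a union bound combined with the bound $\exp(-q_2\cdot|S(\varphi)|) = o(1)$ closes the argument.

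The key input for exhibiting stuck pairs is that $p = c n^{-1/m_2(H)}$ together with $m_2(H\setminus h) < m_2(H)$ places $G_{n,p}$ well above its threshold for $H\setminus h$: by Janson-type concentration, the number of copies of $H\setminus h$ with missing edge at a given pair is typically $\Theta_c(1)$, so $\Omega(n^2)$ pairs host at least one. The main structural step is to show that in any valid $H$-free 2-colouring, both colour classes still capture a positive proportion of these copies, and that these copies are spread across $\Omega(n^2)$ pairs per colour. The plan is to carry this out via a sparse-regularity or hypergraph-containers argument in the spirit of the K{\L}R conjecture, which for each valid $\varphi$ supplies a pseudorandom description of each colour class as an $H$-free subgraph of $G_{n,p}$ to which a supersaturation count of $H\setminus h$-copies applies; a codegree or double-counting lemma then forces $\Omega(n^2)$ pairs to see copies of $H\setminus h$ in both colours simultaneously.

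Part (b) follows a parallel route, but individual stuck pairs are too weak: at three colours one cannot in general hope for $\omega(n^2)$ pairs carrying monochromatic $H\setminus h$-copies in all three colours at $p = c n^{-1/m_2(H)}$. However, $q_3 = \omega(n^{-1/m(H)})$ exceeds the threshold for $H$ itself in $G_{n,q_3}$, so $G_{n,q_3}$ already contains many copies of $H$ and its subgraphs. The strategy is then to look for mixed stuck structures, for instance a copy of $H$ inside $G_{n,q_3}$ each of whose three possible monochromatic completions is blocked by pre-coloured $H\setminus h$-copies of $G$ under $\varphi$. Establishing $\omega(n^{1/m(H)})$ such stuck copies for every valid three-colouring of $G$, together with the first moment method, then yields that $G_{n,q_3}$ hits such a structure with high probability.

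The principal obstacle in both parts is the structural step. An adversarial colouring could a priori concentrate all monochromatic $H\setminus h$-copies on a sublinear number of pairs, or skew the colour classes so thin that standard counts fail. To rule this out one needs a transference principle that reduces the count of substructures of $H$ inside $H$-free colour classes of $G_{n,p}$ to the analogous count inside $H$-free subgraphs of $K_n$ of comparable density, where Tur\'an-type supersaturation takes over. This transference relies precisely on the strict inequality $m_2(H\setminus h) < m_2(H)$: an edge $h'$ with $m_2(H\setminus h') = m_2(H)$ would put $H\setminus h'$ exactly at its own threshold, destroying the counting margin on which the whole argument hinges.
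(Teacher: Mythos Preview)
Your high-level strategy matches the paper's: find, in every valid colouring, many pairs (or copies of $H$) all of whose edges are simultaneously the base of a monochromatic red and a monochromatic blue copy of $H\setminus h$, and then argue that the second random graph hits such a structure. However, there are two substantive gaps.

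First, the containers/union-bound step over colourings is unnecessary. Read the quantifiers in the theorem: the outer ``with high probability'' is over $G$, then $\varphi$ is fixed, and only then is $G_{n,q}$ sampled. So once you have shown that (with high probability over $G$) every valid $\varphi$ yields $\Omega(n^2)$ stuck pairs, a single first-moment bound on $G_{n,q}$ suffices for that fixed $\varphi$; no union bound over the family of colourings is needed. The paper proceeds exactly this way.

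Second, and more seriously, the structural step you describe is the whole difficulty, and your sketch does not supply a mechanism. Knowing that each colour class contains many copies of $H\setminus h$ spread over $\Omega(n^2)$ base pairs does \emph{not} imply that $\Omega(n^2)$ pairs carry copies in both colours; the red and blue base sets could be disjoint. A generic ``codegree or double-counting lemma'' will not bridge this. The paper's key idea is to count, not bare copies of $H\setminus h$, but canonical copies of the reduced edge-rooted product $H \rrprod{2}(H,h)$: a phantom central copy of $H$ in which every edge simultaneously supports one red and one blue copy of $H\setminus h$. Crucially, Lemma~2.3(c) gives $m_2(H\rrprod{2}(H,h)) < m_2(H)$, so the sparse counting lemma applies at $p = cn^{-1/m_2(H)}$; the hypothesis $m_2(H\setminus h)<m_2(H)$ is used precisely here. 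To set up this count, one first applies sparse regularity and runs a Ramsey-type iteration in the reduced graph (Lemma~3.5 and Corollary~3.6) to find a vertex lying in two large monochromatic cliques of different colours; these cliques furnish the regular pairs on which the copies of $H\rrprod{2}(H,h)$ are embedded. A second-moment argument then shows that these copies are not concentrated on few central $H$'s, yielding (after a trade-off parameter $K$) roughly $\zeta K^{-1} n^{v(H)}$ distinct forced copies with probability $1 - O(K^{-1})$; one does not in fact obtain $\Omega(n^2)$ stuck pairs with a fixed constant. Your proposal is missing this entire mechanism, and I do not see how the vague ``transference plus supersaturation'' route would produce the required coincidence of red and blue bases on the same pairs.
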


Observe that the densities $q_2$ and $q_3$ of the random graphs that must be added to create monochromatic copies of $H$ are best possible. Indeed, if $q = O(n^{-2})$, then with positive probability $G_{n,q}$ has no edges, so $\varphi_2$ trivially extends to $G \cup G_{n,q}$. Only slightly less trivially, if $\varphi_3$ only uses two of the three colours on the edges of $G$, then we can colour all the edges of $G_{n,q}$ with the third colour.  If $q = O(n^{-1/m(H)})$, then with positive probability $G_{n,q}$ is $H$-free, thus giving a valid extension of $\varphi_3$. Finally, note that these results cannot be extended to $r \ge 4$ colours, since the two random graphs $G_{n,p}$ and $G_{n,q}$ can be coloured independently with disjoint pairs of colours, so we can avoid creating a monochromatic copy of $H$ until the density of one of the two random graphs exceeds the random Ramsey threshold $C n^{-1/m_2(H)}$ from Theorem~\ref{thm:rodlrucinski}.

\section{The necessity of a condition}

In Theorem~\ref{thm:main}, we impose the condition that there is some edge $h \in E(H)$ such that $H \setminus h$ has a strictly lower $2$-density than $H$.  While this condition covers, for example, strictly $2$-balanced graphs (where the edge $h$ can be chosen arbitrarily), it is natural to ask whether it is necessary.  In this section we show that Theorem~\ref{thm:main} does not apply to all graphs $H$, so some condition is indeed required.

\subsection{Edge-rooted products of graphs}

We first define the edge-rooted product of graphs.

\begin{definition} \label{def:product}
Let $G$ be a graph, let $H$ be a graph rooted at an edge $h = \{u,v\} \in E(H)$ and let $k \in \mathbb{N}$.  To build the \emph{$k$-fold edge-rooted product} $G \rprod{k} (H,h)$, we start with a central copy of $G$ and then attach $k$ copies of $H$ to each edge $g = \{x,y\} \in E(G)$ such that $\{x,y\}$ is the root-edge $h$ in each copy of $H$ and all other vertices in each copy are new and distinct.

In other words, $V( G \rprod{k} (H,h) ) = V(G) \cup \left( E(G) \times [k] \times \left( V(H) \setminus \{u,v\} \right) \right)$, $V(G)$ induces a copy of $G$ and, for each $g = \{x,y\} \in E(G)$ and $i \in [k]$, $\left( \{g \} \times \{i \} \times \left( V(H) \setminus \{u,v\} \right) \right) \cup \{x,y\}$ induces a copy of $H$ with $\{x,y\}$ playing the role of $\{u,v\}$. (Note that there is some slack in this definition, since we have not prescribed an orientation for each attached copy of $H$. In practice, the particular choice of orientation makes no difference, so we will simply assume that some fixed choice has been made.)

The \emph{reduced} $k$-fold edge-rooted product, denoted $G \rrprod{k} (H,h)$, is the subgraph obtained by removing all the edges from the central copy of $G$.
\end{definition}

\begin{figure}[h!]
    \centering
    \begin{tikzpicture}[scale=1.5,every node/.style={circle,draw,color=black,fill=black,inner sep=0pt,minimum width=3pt}]
       \foreach \i in {1,2,3,4}
       {
       \begin{scope}[shift={(-4,0)}]
         \node (a\i) at (90*\i-15:1.5) {};
         \node (b\i) at (90*\i+15:1.5) {};
         \node (v\i) at (90*\i - 45:1) {};
        \end{scope}
        \begin{scope}[]shift={(4,0)}]
         \node (c\i) at (90*\i-15:1.5) {};
         \node (d\i) at (90*\i+15:1.5) {};
         \node (u\i) at (90*\i - 45:1) {};
        \end{scope}
        } 
       \foreach \i / \j in {1/2,2/3,3/4,4/1}
       {
       \draw (v\i)--(v\j);
       \draw (v\i)--(a\i)--(v\j);
       \draw (v\i)--(b\i)--(v\j);
       \draw (u\i)--(c\i)--(u\j);
       \draw (u\i)--(d\i)--(u\j);
       }
       
    \end{tikzpicture}
    \caption{$C_4\rprod{2}K_3$ (on the left) and $C_4\rrprod{2}K_3$ (on the right).}
    \label{fig:examples}
\end{figure}

We have already defined $d_2(H)$, $m_2(H)$ and stated what it means for a graph to be $2$-balanced or strictly $2$-balanced. In a similar fashion, we write $d_1(H) = 0$ if $H$ has no edges and $d_1(H) = e(H)/(v(H)-1)$ otherwise. We then write $m_1(H) = \max_{H' \subseteq H} d_1(H')$ and call this quantity the \emph{$1$-density} of $H$. We say that $H$ is \emph{$1$-balanced} if $m_1(H') \leq m_1(H)$ and \emph{strictly $1$-balanced} if $m_1(H') < m_1(H)$ for all proper subgraphs $H'$ of $H$. Finally, write $d(H) = e(H)/v(H)$ and $m(H) = \max_{H' \subseteq H} d(H')$, which we call the \emph{density} of $H$. We then say that $H$ is \emph{balanced} if $m(H') \leq m(H)$ and \emph{strictly balanced} if $m(H') < m(H)$ for all proper subgraphs $H'$ of $H$. We will make repeated use of the following simple lemma in what follows.

\begin{lemma} \label{lem:2become1}
If $H$ is $2$-balanced with $d_2(H) > 1$, then $H$ is strictly $1$-balanced and strictly balanced.
\end{lemma}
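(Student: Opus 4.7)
The plan is to reduce both claims to a clean algebraic comparison, guided by a geometric picture: $d_2(H)$, $d_1(H)$, and $d(H)$ are the slopes of three lines through $(v(H), e(H))$ anchored at $(2, 1)$, $(1, 0)$, and $(0, 0)$ respectively, and the 2-balanced hypothesis constrains each subgraph data point $(v(H'), e(H'))$ to lie on or below the line from $(2, 1)$. The first step is to observe that $d_2(H) > 1$ rearranges to $e(H) \geq v(H)$, which in turn yields the strict chain $d_2(H) > d_1(H) > d(H) \geq 1$; this monotonicity of slopes drives the whole argument.

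For a proper subgraph $H' \subsetneq H$, I would write $v = v(H)$, $e = e(H)$, $v' = v(H')$, $e' = e(H')$ and introduce
\[
L_2(x) = 1 + \frac{(e-1)(x-2)}{v-2}, \qquad L_1(x) = \frac{e(x-1)}{v-1}, \qquad L_0(x) = \frac{ex}{v},
\]
each passing through $(v, e)$ with slope $d_2(H)$, $d_1(H)$, $d(H)$ respectively. The 2-balanced hypothesis gives $e' \leq L_2(v')$ whenever $v' \geq 3$, while strict 1-balance and strict balance rewrite as $e' < L_1(v')$ and $e' < L_0(v')$. The main computation is to verify the two identities
\[
L_1(v') - L_2(v') = \frac{(v - v')(e - v + 1)}{(v-1)(v-2)}, \qquad L_0(v') - L_1(v') = \frac{e(v - v')}{v(v-1)},
\]
both strictly positive when $v' < v$, using $e - v + 1 \geq 1$. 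Chaining then gives $e' \leq L_2(v') < L_1(v') < L_0(v')$, which yields both desired strict inequalities.

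The remaining boundary cases are routine: for $v' = v$ the properness of $H'$ forces $e' < e$, so both $e'/(v-1) < e/(v-1)$ and $e'/v < e/v$ are immediate; and for $v' \leq 2$ we have $e' \leq 1$, so $d(H') \leq 1/2 < 1 \leq d(H)$ and $d_1(H') \leq 1 < d_1(H)$ hold by hand. The main obstacle is purely computational---carrying out the $L_1 - L_2$ numerator expansion cleanly---and the only conceptual input is the initial observation that $d_2(H) > 1$ triggers the monotonicity $d_2(H) > d_1(H) > d(H)$, which geometrically says that the steeper of two lines through a common endpoint lies strictly below the shallower one to the left of that endpoint.
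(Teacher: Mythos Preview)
Your proof is correct and takes essentially the same approach as the paper: both verify algebraically that the $2$-balanced inequality $e' \le 1 + d_2(H)(v'-2)$ implies the strict $1$-balance and strict balance inequalities once $d_2(H) > 1$ forces $e(H) \ge v(H)$. Your direct argument via the three anchored lines $L_2 < L_1 < L_0$ is a tidier presentation of the same computation the paper carries out by contradiction, and your explicit handling of the boundary cases $v' = v$ and $v' \le 2$ fills in details the paper leaves implicit.
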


\begin{proof}
Suppose that $H$ is not strictly $1$-balanced and let $F \subset H$ be a subgraph with $d_1(F) \ge d_1(H)$. That is, $e(F) / (v(F) - 1) \geq e(H) / (v(H) - 1)$ or, by multiplying the expression out,
\begin{equation} \label{eq:1bal}
e(F) v(H) - e(F) \geq e(H) v(F) - e(H).
\end{equation} 
Since $H$ is $2$-balanced, we have $d_2(H) \ge d_2(F)$, which implies that
$(e(H) - 1)/(v(H) - 2) \geq (e(F) - 1)/(v(F) - 2)$. Rearranging gives
$e(H)v(F) - 2e(H) - v(F) + 2 \geq e(F)v(H) - 2e(F) - v(H) + 2$.
Substituting \eqref{eq:1bal}, we get
\[e(H)v(F) - 2e(H) - v(F) + 2 \geq e(F)v(H) - 2e(F) - v(H) + 2 \geq e(H)v(F) - e(H) - e(F) - v(H) + 2.\]
Cancelling the like terms gives $-e(H) - v(F) \geq -e(F) - v(H)$, which in turn implies that $(e(F) - 1) - (v(F) - 2) \geq (e(H) - 1) - (v(H) - 2)$, which can be rewritten as
$(d_2(F) - 1)(v(F) - 2) \geq (d_2(H) - 1)(v(H) - 2)$. 
However, this is a contradiction, since by assumption $d_2(H) - 1 \geq d_2(F) - 1$ and $v(H) - 2 > v(F) - 2$. The argument in the strictly balanced case follows along almost exactly the same lines.
\end{proof}

The key observation for our purposes is that the edge-rooted product behaves well with respect to the various graph densities. 

\begin{lemma} \label{lem:productdensity}
For any graphs $G$ and $H$ of density at least $1$, any edge $h \in E(H)$ and any $k \in \mathbb{N}$:
\begin{itemize}
	\item[(a)] if $G$ is strictly balanced, $H$ is $2$-balanced and $d(G) < d_2(H)$, then $G \rprod{k} (H,h)$ is strictly balanced,
	\item[(b)] $m_2( G \rprod{k} (H,h) ) = \max \{ m_2(G), m_2(H) \}$ and
	\item[(c)] if $m_2(H \setminus h) < m_2(H)$, then $m_2( G \rrprod{k} (H,h) ) < m_2(H)$.
\end{itemize}
\end{lemma}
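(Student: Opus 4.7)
The three parts share a common decomposition: any induced subgraph $F'$ of $G \rprod{k}(H,h)$ or of $G \rrprod{k}(H,h)$ is recorded by a set $S \subseteq V(G)$ of central vertices and, for each $(g, i) \in E(G) \times [k]$ with $g = \{x, y\}$, a subset $T_{g,i}$ of non-root vertices of the $i$-th attached copy, yielding an induced subgraph $H'_{g,i}$ of $H$ (or of $H \setminus h$ in part (c)) on vertex set $(S \cap \{x, y\}) \cup T_{g,i}$. Each part proceeds by bounding this decomposition piecewise.

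For (b), the plan is to first establish a gluing sub-lemma: if $m_2(A), m_2(B) \leq m$ for some $m \geq 1$ and $C$ arises from $A$ and $B$ by identifying an edge of each, then $m_2(C) \leq m$. For an induced subgraph $C' \subseteq C$ with $v(C') \geq 3$, write $A' = C' \cap A$ and $B' = C' \cap B$; the principal case has both endpoints of the identified edge in $C'$, so induced-ness forces the shared edge into both $A'$ and $B'$, and the bounds $e(A') - 1 \leq m(v(A') - 2)$ and $e(B') - 1 \leq m(v(B') - 2)$ sum directly to $e(C') - 1 \leq m(v(C') - 2)$. Smaller-overlap cases, and those with one of $A', B'$ having fewer than three vertices, are dispatched by a short check using $m \geq 1$. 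Constructing $F = G \rprod{k}(H,h)$ iteratively by attaching the $ke(G)$ copies of $H$ along their root edges then propagates the bound, and the trivial reverse inequality $m_2(F) \geq \max\{m_2(G), m_2(H)\}$ finishes (b).

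For (a), the identities $v(F) = v(G) + ke(G)(v(H) - 2)$ and $e(F) = e(G)(1 + k(e(H) - 1))$ together with $d(G) < d_2(H)$ immediately yield the chain $d(G) < d(F) < d_2(H)$. I would then define $\phi(F') := d(F) v(F') - e(F')$, so that $\phi(F) = 0$, and try to show $\phi(F') > 0$ for every proper $F' \subsetneq F$. Splitting $\phi(F') = |S|(d(F) - d(G[S])) + \sum_{(g,i)} (d(F) |T_{g,i}| - e^{ext}_{g,i})$, where $e^{ext}_{g,i}$ counts the non-root edges of $H'_{g,i}$, the central contribution is non-negative and strictly positive whenever $|S| \geq 1$ by strict balancedness of $G$ and $d(F) > d(G)$. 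The piece contributions are bounded below using $2$-balancedness of $H$ (equivalently, by Lemma~\ref{lem:2become1}, strict balancedness of $H$) together with $d(F) < d_2(H)$, and a careful case analysis according to $\alpha_{g,i} := |S \cap \{x,y\}| \in \{0, 1, 2\}$ shows that any deviation from $F$ strictly increases $\phi$.

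For (c), I would apply a two-tiered bound to each attached copy part. When $\alpha_{g,i} = 2$, augmenting $F_{g,i}$ by reinstating the absent root edge gives a subgraph of $H$, so $e(F_{g,i}) \leq m_2(H) |T_{g,i}|$. When $\alpha_{g,i} \leq 1$, using $F_{g,i} \subseteq H \setminus h$ directly gives $e(F_{g,i}) \leq m_2(H \setminus h)(v(F_{g,i}) - 2) + 1$, which is strictly tighter by the hypothesis $m_2(H \setminus h) < m_2(H)$. Summing these bounds across pieces and combining with $v(F') = |S| + \sum |T_{g,i}|$ should produce the required strict inequality $(e(F') - 1)/(v(F') - 2) < m_2(H)$. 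The main obstacle will be handling boundary cases cleanly, particularly when $|S| \leq 1$ so that no $\alpha_{g,i} = 2$ pieces exist, and when individual pieces have very few non-root vertices; in those regimes the strict gap must be extracted from the $m_2(H \setminus h)$ bounds alone via a careful summation.
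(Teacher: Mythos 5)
Your proposal is correct but takes a genuinely different route from the paper. For part (b), you prove a gluing sub-lemma (that $m_2$ is preserved when two graphs are joined along a single identified edge) and then build $G\rprod{k}(H,h)$ iteratively; the paper instead takes a smallest subgraph attaining the $2$-density, reduces to $2$-connectedness so that every non-empty attached piece $F_{g,i}$ contains the root edge $g$, and sums the density inequalities in one step. Both carry the same arithmetic content. For part (a), you replace the paper's extremal argument (take a smallest maximiser $F$ of $d(\cdot)$, argue it is connected, then rule out pieces containing only one endpoint of $g$) with a linear functional $\phi(F')=d(F)v(F')-e(F')$ that decomposes additively over the central and attached pieces; this avoids the minimality reductions and treats all $\alpha_{g,i}\in\{0,1,2\}$ uniformly. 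For part (c), the paper's $2$-connectedness reduction collapses the case analysis so that, apart from the one exceptional case $F\subseteq H\setminus h$, every non-empty piece has $\alpha=2$ and the inequality is a one-liner; you carry all values of $\alpha$ through, which is precisely why the boundary bookkeeping you flag becomes delicate. Your framework is more uniform across the three parts; the paper's is shorter in (c).

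One ingredient is missing from your sketch of (a) and should be made explicit: bounding the $\alpha\le 1$ piece contributions from below does not follow from $d(F)<d_2(H)$ alone. For $\alpha=1$ you need $d(F)\ge d_1(H)=m_1(H)$, so that $d(F)\,|T_{g,i}|\ge e^{\mathrm{ext}}_{g,i}$ (with strict inequality for proper pieces, by strict $1$-balancedness of $H$ from Lemma~\ref{lem:2become1}); for $\alpha=0$ you need $d(F)>d(H)=m(H)$, which then follows since $d_1(H)>d(H)$ whenever $H$ has a cycle. The inequality $d\bigl(G\rprod{k}(H,h)\bigr)\ge m_1(H)$ is established in the paper as~\eqref{ineq:largerthan1densityb}; it is a short computation from the explicit formula for $d(F)$ together with $d(G)\ge 1$ and $k\ge 1$, but it is essential and your plan should record it. With this in place, your $\phi$-splitting does give $\phi(F')>0$ for every proper non-empty $F'$, and the remaining case analysis in (a) and the boundary cases in (c) check out.
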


\begin{proof}
\begin{itemize}
	\item[(a)]  Let $F \subseteq G \rprod{k} (H,h)$ be a smallest (induced) subgraph maximising $d(F) = e(F) / v(F)$. We wish to show that $F = G \rprod{k} (H,h)$.  We start with a lower bound on the density of $G \rprod{k} (H,h)$:
\begin{align} 
	m(G \rprod{k} (H,h)) &\ge d( G \rprod{k} (H,h) ) = \frac{e(G) + k e(G) (e(H) - 1)}{v(G) + k e(G) (v(H) - 2)} \notag \\
	&= \frac{e(H) - (1 - \tfrac{1}{k})}{v(H) - 1 - (1 - \tfrac{1}{kd(G)})} \ge \frac{e(H)}{v(H) - 1} = d_1(H) = m_1(H), \label{ineq:largerthan1densityb} 
\end{align}
where the inequality on the second line follows since either $k d(G) = 1$, in which case we have equality, or $(1 - \tfrac{1}{k})/(1 - \tfrac{1}{kd(G)}) \le 1 \le d(H) < d_1(H)$.  Note that the final equality, $d_1(H) = m_1(H)$, is an application of Lemma~\ref{lem:2become1} above.

We also observe that $d(G \rprod{k} (H,h))$ is a convex combination of $d(G)$ and $d_2(H)$:
\begin{equation} \label{ineq:largerthan1densitya}
    \frac{e(G) + k e(G) (e(H) - 1)}{v(G) + k e(G) (v(H) - 2)} = d(G) \frac{v(G)}{v(G \rprod{k} (H,h))} + d_2(H) \left(1 - \frac{v(G)}{v(G \rprod{k} (H,h))} \right).
\end{equation}

	Now, for each $g \in E(G)$ and $i \in [k]$, let $F_{g,i} \subseteq H$ be the subgraph induced by the vertices of $F$ in the $i$th copy of $H$ attached to the edge $g$ in the central copy of $G$.  Let $F_0 \subseteq G$ be the subgraph induced by the vertices of $F$ in the central copy of $G$.
	
	By the minimality of the size of $F$, we may assume that $F$ is connected, as otherwise its densest component would be a smaller subgraph attaining the maximum density. We cannot have $F \subseteq H$ since, by~\eqref{ineq:largerthan1densityb}, the density of $G \rprod{k} (H,h)$ is at least $m_1(H)$, which is strictly larger than $m(H)$.  Thus, $F_{g,i}$ must be non-empty for at least two pairs $(g,i) \in E(G) \times [k]$ and, hence, to be connected, each non-empty $F_{g,i}$ must contain at least one vertex of $g$.
	
	Now suppose there was some $(g,i)$ such that $F_{g,i}$ contained only one of the two endpoints of $g$.  Then, by removing $F_{g,i}$ from $F$, we lose $e(F_{g,i})$ edges and $v(F_{g,i})-1$ vertices.  Since $F_{g,i} \subset H$, the ratio $e(F_{g,i})/(v(F_{g,i})-1)$ is at most $m_1(H)$, which by~\eqref{ineq:largerthan1densityb} is at most $m(G \rprod{k} (H,h))$.  Removing $F_{g,i}$ would therefore not decrease the density of $F$, contradicting the minimality of its size.  Thus, if $F_{g,i}$ is non-empty, we must have $g \in E(F_{g,i})$. Hence,
	\begin{equation} \label{eqn:prodparameters}
	v(F) = v(F_0) + \sum_{g \in E(F_0)} \sum_{i = 1}^k \left( v(F_{g,i}) - 2 \right) \quad \textrm{and} \quad e(F) = e(F_0) + \sum_{g \in E(F_0)} \sum_{i=1}^k \left( e(F_{g,i}) - 1 \right).
	\end{equation}
	Thus,
	\[ d(F) = \frac{e(F_0) + \sum_{g \in E(F_0)} \sum_{i=1}^k (e(F_{g,i}) - 1)}{v(F_0) + \sum_{g \in E(F_0)} \sum_{i=1}^k (v(F_{g,i}) - 2)} = d(F_0) \frac{v(F_0)}{v(F)} + \sum_{g \in E(F_0)} \sum_{i=1}^k d_2(F_{g,i}) \frac{v(F_{g,i}) - 2}{v(F)}. \]
	
	Since $F_0 \subseteq G$ and $G$ is balanced, $d(F_0) \le d(G)$.  Similarly, for each $g$ and $i$, $d_2(F_{g,i}) \le d_2(H)$.  We therefore have
	\[ d(F) \le d(G) \frac{v(F_0)}{v(F)} + d_2(H) \left( 1 - \frac{v(F_0)}{v(F)} \right). \]
	
	Comparing this to~\eqref{ineq:largerthan1densitya}, since $d(G) < d_2(H)$, for $d(F) \ge d(G \rprod{k} (H,h))$ to hold we require 
	\begin{equation} \label{ineq:F0contribution}
		\frac{v(F_0)}{v(F)} \le \frac{v(G)}{v(G \rprod{k} (H,h))} = \frac{1}{1 + k d(G) (v(H) - 2)}.
	\end{equation}
	
	Now $v(F) = v(F_0) + \sum_{g \in E(F_0)} \sum_{i=1}^k (v(F_{g,i}) - 2) \le v(F_0) + k e(F_0) (v(H) - 2)$, with equality if and only if $F_{g,i} = H$ for all $g \in E(F_0)$ and $i \in [k]$.  Therefore,
	\[ \frac{v(F_0)}{v(F)} \ge \frac{v(F_0)}{v(F_0) + k e(F_0) (v(H) - 2)} = \frac{1}{1 + k d(F_0) (v(H) - 2)}. \]
	
	Thus, in order to satisfy the inequality of~\eqref{ineq:F0contribution}, $d(F_0) \ge d(G)$.  As $G$ is strictly balanced, it follows that $F_0 = G$ and then, since $F_{g,i} = H$ for all $g$ and $i$, we have $F = G \rprod{k} (H,h)$, as required.
	
	\item[(b)]  Since $G, H \subseteq G \rprod{k} (H,h)$, we immediately have $m_2(G \rprod{k} (H,h)) \ge \max \{ m_2(G), m_2(H) \}$.  The proof of the upper bound follows the same lines as in part (a).  Let $F \subseteq G \rprod{k} (H,h)$ be a smallest subgraph realising the $2$-density, that is, $m_2(G \rprod{k} (H,h)) = d_2(F) = (e(F) - 1)/(v(F) - 2)$.
	
	Let $F_0$ and, for each $(g,i) \in E(G) \times [k]$, $F_{g,i}$ be defined as in part (a).  We may assume that $F_{g,i} \neq \emptyset$ for at least two pairs $(g,i)$, since otherwise $F \subseteq H$ and thus $d_2(F) \le m_2(H)$.  By the minimality of the size of $F$, we may further assume that $F$ is $2$-connected, as otherwise one of the blocks $B$ of $F$ will satisfy $m_2(B) \ge m_2(F)$ (see, for instance, Lemma 8 of~\cite{NS16}).  In particular, this implies that $g \in E(F_{g,i})$ whenever $F_{g,i} \neq \emptyset$.
	
	The vertices and edges of $F$ can then be enumerated as in~\eqref{eqn:prodparameters}, so
	\begin{equation} \label{eqn:2densityofF}
	d_2(F) = \frac{e(F) - 1}{v(F) - 2} = \frac{e(F_0) - 1 + \sum_{g \in E(F_0)} \sum_{i = 1}^k \left( e(F_{g,i}) - 1 \right)}{v(F_0) - 2 + \sum_{g \in E(F_0)} \sum_{i=1}^k \left( v(F_{g,i}) - 2 \right)}.
	\end{equation}
	Since $e(F_0)-1 \leq m_2(G) (v(F_0) - 2)$ and $e(F_{g, i}) - 1 \leq m_2(H) (v(F_{g,i})-2)$ for each $(g,i)$, it follows that $d_2(F) = m_2(G \rprod{k} (H,h)) \le \max \{ m_2(G), m_2(H) \}$.
	
	\item[(c)]  The product $G \rrprod{k} (H,h)$ is obtained by deleting the edges of the central copy of $G$ from the product $G \rprod{k} (H,h)$.  We show $m_2(G \rrprod{k} (H,h)) < m_2(H)$ by following the argument of part (b).  To start, let $F \subseteq G \rrprod{k} (H,h)$ be a smallest subgraph attaining the $2$-density.
	
	As before, let $F_0$ be the subgraph of $G$ induced by the vertices of $F$ from the central copy of $G$ and, for $g \in E(G)$ and $i \in [k]$, let $F_{g,i}$ be the subgraph of $H$ induced by the vertices of $F$ in the $i$th copy of $H$ attached to the edge $g$.  Note that, since the edges from the central copy of $G$ are deleted in $G \rrprod{k} (H,h)$, neither the edges of $F_0$ nor the edge $g$ in $F_{g,i}$ (if present) appear in $F$.  However, it will be convenient for us to include them in $F_0$ and $F_{g,i}$ for our calculations.
	
	If $F_{g,i}$ is only non-empty for one pair of $(g,i)$, then $F = F_{g,i} \setminus g \subseteq H \setminus h$, so $d_2(F) \le m_2(H \setminus h) < m_2(H)$.  Otherwise, since $F$ must be $2$-connected, $g$ must be in $F_{g,i}$ whenever $F_{g,i}$ is non-empty.  We can then compute the $2$-density of $F$ as in part (b), arriving at an expression similar to~\eqref{eqn:2densityofF}, except the edges in $F_0$ do not appear in $F$.  Thus,
	\[ d_2(F) = \frac{-1 + \sum_{g \in E(F_0)} \sum_{i=1}^k \left( e(F_{g,i}) - 1 \right) }{v(F_0)- 2 + \sum_{g \in E(F_0)} \sum_{i=1}^k \left( v(F_{g,i}) - 2 \right) } < \frac{\sum_{g \in E(F_0)} \sum_{i=1}^k \left( e(F_{g,i}) - 1 \right)}{\sum_{g \in E(F_0)} \sum_{i=1}^k \left( v(F_{g,i}) - 2 \right)} \le m_2(H), \]
	since $F_{g,i} \subseteq H$ implies $e(F_{g,i}) - 1 \le m_2(H) (v(F_{g,i}) - 2)$. \qedhere
\end{itemize}
\end{proof}

\subsection{Graphs requiring unusually many extra random edges}

Part (c) of Lemma~\ref{lem:productdensity} shows the role played by the assumption of the existence of the edge $h$ in Theorem~\ref{thm:main}.  We will show how to use this to prove Theorem~\ref{thm:main} in the next section, but first we use the other parts of this lemma to construct graphs for which the conclusion of Theorem~\ref{thm:main} does not hold.

\begin{theorem} \label{thm:condition}
Let $F$ be a $2$-balanced graph containing a cycle, let $f \in E(F)$ be an arbitrary edge of $F$ and let $H = F \rprod{1} (F,f)$.  Let $G = G_{n,p}$ for $p = c n^{-1/m_2(H)}$, where $c > 0$ is a sufficiently small constant. Then, with high probability, the following statements hold:
\begin{itemize}
	\item[(a)] There is a monochromatic-$H$-free $2$-edge-colouring of $G$ such that if $q = o(n^{-v(F) / e(F)})$ the colouring can with high probability be extended to a colouring of $G \cup G_{n,q}$ without monochromatic copies of $H$.
	\item[(b)] There is a monochromatic-$H$-free $3$-edge-colouring of $G$ and $\delta = \delta(H) > 0$ such that if $q = o(n^{-1/m(H) + \delta})$ the colouring can with high probability be extended to a colouring of $G \cup G_{n,q}$ without monochromatic copies of $H$.
\end{itemize}
\end{theorem}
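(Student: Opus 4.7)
My plan handles parts (a) and (b) in parallel, using the same backbone and central structural observation. Recall that $H = F\rprod{1}(F,f)$ naturally contains $e(F)+1$ distinguished copies of $F$: the central one plus $e(F)$ peripheral copies attached at each of its edges. Any monochromatic copy of $H$ forces each of these sub-copies to be monochromatic. The first move is to fix $\varphi_2$ to be a monochromatic-$F$-free $2$-edge-colouring of $G$, which exists with high probability by the $0$-statement of R\"odl--Ruci\'nski applied to $(F,2)$: indeed $F$ contains a cycle (so it is not a forest of stars and paths of length $3$) and $m_2(F)=m_2(H)$ by Lemma~\ref{lem:productdensity}(b), so for $c$ small enough $G_{n,p}$ is whp not $(F,2)$-Ramsey. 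Since $F\subseteq H$, this $\varphi_2$ is also monochromatic-$H$-free.

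For part (a), I would then observe that if some extension of $\varphi_2$ to $G\cup G_{n,q}$ produced a monochromatic copy of $H$, each of its $e(F)+1$ sub-$F$'s would have to use at least one edge of $G_{n,q}$ (else it would be monochromatic in $\varphi_2$), so the new edges of the mono $H$ form an edge-transversal of the sub-$F$'s. Since each central edge of $H$ covers two sub-$F$'s while each non-central edge covers only one, the minimum such transversal has size $e(F)$. A direct first-moment calculation, using $p=cn^{-1/m_2(F)}$, $m_2(F)=(e(F)-1)/(v(F)-2)$, $e(H)=e(F)^2$ and $v(H)=v(F)+e(F)(v(F)-2)$, then bounds the expected number of $H$-copies in $G\cup G_{n,q}$ with at least $e(F)$ edges from $G_{n,q}$ by
\[\binom{e(H)}{e(F)}\,n^{v(H)}\,p^{e(H)-e(F)}\,q^{e(F)}=O\bigl(n^{v(F)}\,q^{e(F)}\bigr),\]
which is $o(1)$ for $q=o(n^{-v(F)/e(F)})$. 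Markov's inequality then guarantees that whp every $H$ in $G\cup G_{n,q}$ has a sub-$F$ lying entirely in $G$, so it cannot be monochromatic under any extension of $\varphi_2$.

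For part (b), I would regard $\varphi_2$ as a $3$-colouring $\varphi_3$ with colour $3$ unused on $G$. By Lemmas~\ref{lem:2become1} and~\ref{lem:productdensity}(a) applied with $G=H=F$, the graph $H$ is strictly balanced, whence $m(H)<m_2(H)=m_2(F)$; one can therefore choose $\delta>0$ small enough that $q=o(n^{-1/m(H)+\delta})$ still satisfies $q=o(n^{-1/m_2(F)})$. The R\"odl--Ruci\'nski $0$-statement applied to $(F,3)$ then yields a monochromatic-$F$-free $3$-colouring $\psi$ of $G_{n,q}$ with high probability, which is combined with $\varphi_3$ on the disjoint edge set. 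A mono $H$ lying wholly in $G$ violates $\varphi_2$ and a mono $H$ lying wholly in $G_{n,q}$ violates $\psi$; the only surviving threat is a ``mixed'' mono $H$, and the transversal argument now runs symmetrically on both sides, forcing $|E_G|,|E_q|\geq e(F)$.

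The main obstacle lies in this last step: bounding the expected number of mixed mono $H$-copies. Since $q$ now exceeds $n^{-v(F)/e(F)}$, the naive first-moment sum $\sum_t\binom{e(H)}{t}n^{v(H)}p^{e(H)-t}q^{t}$ over $t\in[e(F),e(H)-e(F)]$ no longer reduces to $o(1)$, so the argument of part (a) breaks down. To close the gap one must further exploit (i) the constraint that $E_G$ is monochromatic in $\varphi_2$, so its edges lie in the $F$-free colour-class subgraphs of $G$ (whose copy counts are controlled by Kohayakawa--\L{}uczak--R\"odl-type counting lemmas for sparse random graphs), (ii) the strict balance of $H$, which controls contributions from proper subgraph patterns, and (iii) the freedom to take $c=c(H)$ small, which suppresses configurations with many $G$-edges via a $c^{|E_G|}$ factor. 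A refined first-moment calculation weaving together these three ingredients, tuned to a suitable $\delta=\delta(H)>0$, is the crux of part (b).
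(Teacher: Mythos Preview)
Your argument for part~(a) is correct and essentially the same as the paper's; the only cosmetic difference is that the paper observes directly that the $e(F)$ peripheral copies of $F$ are pairwise edge-disjoint (their root edges exhaust the central copy), which immediately gives the bound $\ge e(F)$ on the number of $G_{n,q}$-edges, rather than going through a transversal formulation involving all $e(F)+1$ copies.

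For part~(b), however, there is a genuine gap. Your plan is to take an arbitrary monochromatic-$F$-free $3$-colouring $\psi$ of $G_{n,q}$ and argue by first moment over mixed copies of $H$ with at least $e(F)$ edges on each side. You correctly diagnose that the naive first moment fails --- indeed, for $|E_G|=e(F)$ the exponent of $n$ in $n^{v(H)}p^{e(F)}q^{e(H)-e(F)}$ works out to $(2e(F)-v(F))/\bigl(e(F)(e(F)-1)\bigr)+O(\delta)>0$ --- but your proposed remedies do not close it. Smallness of $c$ only contributes a constant factor $c^{e(F)}$, which cannot beat a positive power of $n$. The constraint that the $G$-part lies in an $F$-free colour class is not obviously useful either: when the $e(F)$ $G$-edges are chosen as one non-root edge from each peripheral copy, they may form a graph (for instance, a matching) so sparse that $F$-freeness gives no saving. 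And K\L R-type counting lemmas provide \emph{lower} bounds in regular pairs, not the upper bounds you would need here.

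The paper proceeds quite differently. Instead of a generic $F$-free colouring of $G_{n,q}$, it uses a \emph{greedy} rule: process the edges of $G_{n,q}$ one by one, colouring each green unless that completes a green copy of $H$, in which case colour it red. Strict balance of $H$ ensures (for some $\delta_1>0$) that with high probability the copies of $H$ in $G_{n,q}$ are pairwise edge-disjoint. Any monochromatic $H$ in the extension must then be red; each of its $e(F)$ edge-disjoint $F$-copies contains a red $G_{n,q}$-edge $e$, and each such $e$ is the last edge of an otherwise green copy $H_e\subseteq G_{n,q}$ of $H$, with the $H_e$'s edge-disjoint. The red $H$ together with the $H_e$'s is then a single fixed structure on at most $v(F)+e(F)(v(F)-2+v(H)-2)$ vertices with $e(F)(e(F)+e(H)-1)$ edges, at least $e(F)\,e(H)$ of them in $G_{n,q}$, and now a direct first-moment bound gives $o\bigl(n^{e(F)(\delta\,e(H)-1)}\bigr)=o(1)$ once $\delta\le\min\{1/e(H),\delta_1\}$. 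The point is that the greedy rule manufactures far more structure --- $e(F)$ entire disjoint copies of $H$ inside $G_{n,q}$ --- than the bare $e(F)$ edges your symmetric-transversal argument produces, and it is precisely this extra structure that makes the first moment tractable.
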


\begin{proof}
Since $F$ is $2$-balanced and contains a cycle, we have $d(F) \ge 1$ and $d_2(F) > 1$.  Using Lemma~\ref{lem:productdensity}(b), $m_2(H) = m_2(F)$.  Hence, if the constant $c$ is sufficiently small, the R\"odl--Ruci\'nski theorem implies that with high probability we can find a $2$-colouring $\varphi$ of $E(G)$ without any monochromatic copy of $F$.  This is the edge-colouring we extend in both cases.

\begin{itemize}
	\item[(a)]  In this case, extend $\varphi$ to the edges of $G_{n,q}$ arbitrarily.  Observe that $H = F \rprod{1} (F,f)$ consists of $e(F)$ edge-disjoint copies of $F$.  Since there are no monochromatic copies of $F$ in $G$, any monochromatic copy of $H$ in $G \cup G_{n,q}$ must contain at least $e(F)$ edges from $G_{n,q}$.

There are at most $n^{v(H)}$ potential copies of $H$ and $2^{e(H)}$ ways to distribute its edges between $G$ and $G_{n,q}$.  Since $q \le p$, the probability that a copy with at least $e(F)$ edges from $G_{n,q}$ appears in $G \cup G_{n,q}$ is at most $p^{e(H) - e(F)} q^{e(F)}$.  Thus, by the union bound, the probability that there is a copy of $H$ in $G \cup G_{n,q}$ with at least $e(F)$ edges from $G_{n,q}$ is at most
\[ n^{v(H)} 2^{e(H)} p^{e(H) - e(F)} q^{e(F)} = 2^{e(H)}c^{e(H) - e(F)} n^{v(F) + e(F)(v(F)-2) - e(F) (e(F)-1) / m_2(H)} q^{e(F)}, \]
where we used that $v(H) = v(F) + e(F)(v(F)-2)$ and $e(H) = e(F)^2$.
As $m_2(H) = m_2(F) = \frac{e(F)-1}{v(F)-2}$, this simplifies to $2^{e(H)}c^{e(H) - e(F)} n^{v(F)} q^{e(F)}$, which is $o(1)$ by our choice of $q$.  Hence, with high probability our arbitrary extension of $\varphi$ to $G \cup G_{n,q}$ does not create a monochromatic copy of $H$.
	
	\item[(b)] The colouring $\varphi$ uses two colours, say red and blue.  This leaves us with one unused colour, say green, that we can use when extending $\varphi$ to the edges of $G_{n,q}$.
	
	As $F$ is $2$-balanced, Lemma~\ref{lem:2become1} implies that it is also strictly balanced.  By Lemma~\ref{lem:productdensity}(a), it follows that $H$ is strictly balanced. As a consequence, any union of two copies of $H$ that share at least an edge must be strictly denser than $H$ itself.  Indeed, the subgraph common to both copies of $H$ is a proper subgraph and therefore strictly sparser than $H$.  Hence, the vertices and edges added in the second copy in the union must increase the overall density.
	
	There is thus some $\delta_1 = \delta_1(H) > 0$ such that, whenever $q = o(n^{-1/m(H) + \delta_1})$, intersecting copies of $H$ do not appear in $G_{n,q}$. That is,
	the copies of $H$ appearing in $G_{n,q}$ are with high probability pairwise edge-disjoint.  We shall choose our $\delta(H)$ to be less than this $\delta_1(H)$.
	
	We now order the edges of $G_{n,q}$ arbitrarily and process them one-by-one.  We colour each edge green, unless that would create a green copy of $H$, in which case we colour the edge red.  When colouring in this fashion, if we create a monochromatic copy of $H$, it clearly must be red.
	
	Consider a red copy $H_0$ of $H$ in our colouring of $G \cup G_{n,q}$.  Since $H_0$ is an edge-disjoint union of $e(F)$ copies of $F$ and the colouring $\varphi$ of $G$ has no monochromatic copy of $F$, each copy of $F$ in $H_0$ must contain at least one red edge from $G_{n,q}$.  An edge $e$ from $G_{n,q}$ is only red if it is the last edge of an otherwise green copy $H_e$ of $H$, which must be wholly contained in $G_{n,q}$.  Moreover, for $e \neq e'$, the copies $H_e$ and $H_{e'}$ of $H$ are edge-disjoint.
	
	This gives us a subgraph of $G \cup G_{n,q}$ with at most $v(F) + e(F)(v(F) - 2 + v(H) - 2)$ vertices and $e(F) ( e(F) + e(H) - 1)$ edges, of which at least $e(F)e(H)$ edges come from $G_{n,q}$. Since $q \le p$ and there are at most some constant $K$ ways of building such a subgraph and dividing its edges between $G$ and $G_{n,q}$, the probability of finding such a structure is at most 
	\[ K n^{v(F) + e(F) (v(F) - 2 + v(H) - 2)} p^{e(F) (e(F) - 1)} q^{e(F) e(H)}. \]
	
	Since $p \le n^{-1/m_2(H)} = n^{-(v(F)-2)/(e(F)-1)}$, this is at most $K n^{v(F) + e(F)(v(H) - 2)} q^{e(F)e(H)}$.  Now $q = o(n^{-1/m(H) + \delta}) = o(n^{-v(H) / e(H) + \delta})$, since $H$ is strictly balanced.  Thus the upper bound on the probability of the appearance of a red copy of $H$ is $o(n^{v(F) - 2e(F) + \delta e(F) e(H)}) = o(n^{e(F)(\delta e(H) - 1)})$, since $e(F) \ge v(F)$.  Hence, if we choose $\delta \le \min \{ 1/e(H), \delta_1(H) \}$, this probability is $o(1)$, so with high probability we can extend the colouring to the edges of $G_{n,q}$ without creating a monochromatic copy of $H$. \qedhere
\end{itemize}
\end{proof}

\section{The proof of Theorem~\ref{thm:main}}

Having shown in the previous section that some condition on the graph $H$ is necessary in Theorem~\ref{thm:main}, we now show that our condition is sufficient.  We begin with a sketch of the proof and then recall several useful results before providing the details of the argument.

\subsection{An overview of the proof}

We shall assume the colours used are red, blue and, in the case of three-colourings, green. Our goal is to find structures in the first random graph, $G$, that force the creation of a monochromatic copy of $H$ no matter how the edges of the second random graph, $G_{n,q}$, are coloured. To that end, we make the following definitions.

\begin{definition}[Colour-forced edges]
A copy of $H \setminus h$ in $G$ is \emph{supported} on the pair $\{x,y\}$ if $\{x,y\}$ maps to the missing edge $h$. We then call $\{x,y\}$ the \emph{base} of the copy.  Given an edge-colouring $\varphi$, we say $\{x,y\}$ is a \emph{red, blue or green base} if it is the base of a monochromatic copy of $H \setminus h$ of the corresponding colour.  Finally, we say a pair $\{x,y\}$ is \emph{green-forced} if it is both a red and a blue base simultaneously, with \emph{blue-forced} and \emph{red-forced} defined similarly.
\end{definition}

In the two-colour case, observe that it is impossible to extend $\varphi_2$ to a green-forced pair, since colouring it either red or blue would create a monochromatic copy of $H$.  For the first assertion of Theorem~\ref{thm:main}, we shall show that with high probability $G$ is such that every two-colouring $\varphi_2$ admits quadratically many green-forced pairs.  Then, again with high probability when $q_2 = \omega(n^{-2})$, one of these pairs will be an edge of the second random graph $G_{n,q_2}$, so any extension of $\varphi_2$ to $G \cup G_{n,q_2}$ will create a monochromatic copy of $H$.

When dealing with three colours, our goal will instead be to show that there is some colour, say green, such that the green-forced pairs in $G$ are sufficiently dense that, when $q_3 = \omega(n^{-1/m(H)})$, we will find a copy of $H$ in $G_{n,q}$ consisting solely of green-forced pairs.  If any one of its edges is coloured red or blue, it will complete a monochromatic copy of $H$ with edges from $G$.  On the other hand, if all of its edges are coloured green, we obtain a green copy of $H$ instead.

To find these colour-forced structures, we consider the reduced graph of a regular partition of $G$ (with respect to the colouring $\varphi_2$ or $\varphi_3$).  In this reduced graph we will find two colours, say red and blue, and a copy of $H \rrprod{2} (H,h)$ such that for each (removed) edge from the central copy of $H$, one of the attached copies of $(H,h)$ is monochromatic red and the other is monochromatic blue.  By applying the sparse counting lemma, we will deduce the existence of many potential copies of $H$ consisting of green-forced edges, from which we will be able to draw the desired conclusion.

Although the proof can be simplified in the two-coloured setting, for the sake of brevity we shall present a single unified argument allowing for three colours throughout, and only differentiate between the two cases at the end of the proof.

\subsection{Some preliminaries}

Here we collect several results about random graphs and sparse regularity that we shall use in our proof.

\subsubsection{Random graphs}

The Hoeffding inequality shows that $G_{n,p}$ does not have any subgraphs that are far sparser or denser than expected with high probability.

\begin{proposition} \label{prop:upperuniform}
Let $\eta > 0$ be fixed and suppose $p = \omega(n^{-1})$.  Then, with high probability, $G_{n,p}$ is such that the following holds for any disjoint sets $X, Y$ of vertices with $\abs{X}, \abs{Y} \ge \eta n$:
\begin{itemize}
	\item[(i)] $\tfrac12 \binom{\abs{X}}{2} p \le e(G_{n,p}[X]) \le 2 \binom{\abs{X}}{2} p$ and
	\item[(ii)] $\tfrac12 \abs{X} \abs{Y} p \le e(G_{n,p}[X,Y]) \le 2 \abs{X} \abs{Y} p$.
\end{itemize}
\end{proposition}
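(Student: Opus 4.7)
The plan is the standard Chernoff-plus-union-bound argument. For any fixed disjoint pair $(X,Y)$ with $\abs{X}, \abs{Y} \geq \eta n$, the quantities $e(G_{n,p}[X])$ and $e(G_{n,p}[X,Y])$ are sums of independent Bernoulli$(p)$ random variables, one per potential edge, with means $\binom{\abs{X}}{2}p$ and $\abs{X}\abs{Y}p$, respectively. Both means are at least of order $\eta^2 n^2 p$, which under the hypothesis $p = \omega(n^{-1})$ satisfies $n^2 p = \omega(n)$.

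First I would apply the multiplicative Chernoff bound: for any sum $S$ of independent $\{0,1\}$-variables with mean $\mu$, one has $\Pr\sqb{\abs{S-\mu} \geq \mu/2} \leq 2\exp(-\mu/12)$. Applied to the two random variables above, this shows that for a fixed pair $(X,Y)$ each of (i) and (ii) fails with probability at most $2\exp(-c n^2 p)$ for some constant $c = c(\eta) > 0$.

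Next I would take a union bound over all disjoint ordered pairs of vertex sets. Since each vertex lies in $X$, in $Y$, or in neither, there are at most $3^n$ such pairs. Combining this with the Chernoff estimate, the probability that (i) or (ii) fails for some admissible pair is at most
\[ 4 \cdot 3^n \exp\paren{-c n^2 p} \leq \exp\paren{-c n^2 p + 2n}, \]
which tends to $0$ because $n^2 p = \omega(n)$ forces the exponent to $-\infty$.

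There is no real obstacle here; the only point worth checking is that the constant $c$ in the Chernoff bound can be taken to depend only on $\eta$ (and not on the exact sizes of $X$ and $Y$), which is immediate since both means are bounded below by $\tfrac{1}{2}\eta^2 n^2 p$ whenever $\abs{X}, \abs{Y} \geq \eta n$ and $n$ is large enough.
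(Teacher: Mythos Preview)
Your argument is correct and is exactly the standard approach the paper has in mind: the paper does not write out a proof but simply remarks that ``the Hoeffding inequality shows'' the proposition, i.e.\ concentration for each fixed pair combined with a union bound over the at most $3^n$ choices of $(X,Y)$. Your Chernoff-plus-union-bound write-up fills in precisely these details.
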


A simple application of Markov's inequality also shows that $G_{n,p}$ is unlikely to contain many  more copies of any subgraph than expected.

\begin{proposition} \label{prop:fewsubgraphs}
Given any graph $F$ with $v$ vertices and $e$ edges and any $K > 1$, the probability that there are more than $K n^v p^e$ copies of $F$ in $G_{n,p}$ is at most $1/K$.
\end{proposition}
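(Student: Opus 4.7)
The plan is to apply Markov's inequality to the random variable $X$ counting copies of $F$ in $G_{n,p}$. The key step is to bound the expectation $\mathbb{E}[X]$ from above by $n^v p^e$, after which the conclusion is immediate. There is no real obstacle here; the only mild subtlety is being precise about what ``copy'' means so that the constant comes out correct.

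More concretely, I would first observe that the number of potential copies of $F$ inside the complete graph $K_n$ is at most $\binom{n}{v} \cdot v!/|\mathrm{Aut}(F)| \le n^v/|\mathrm{Aut}(F)| \le n^v$, since every unlabelled copy of $F$ on a fixed $v$-element vertex set corresponds to exactly $|\mathrm{Aut}(F)|$ labellings. Equivalently, one can count injective maps $V(F) \to [n]$, of which there are at most $n^v$, and then divide by $|\mathrm{Aut}(F)|$. For each such potential copy, the probability that all $e$ of its edges appear in $G_{n,p}$ is exactly $p^e$, by independence of the edges of the binomial random graph.

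By linearity of expectation, it follows that $\mathbb{E}[X] \le n^v p^e$. Markov's inequality then gives
\[
\Pr[X > K n^v p^e] \le \frac{\mathbb{E}[X]}{K n^v p^e} \le \frac{1}{K},
\]
which is exactly the desired bound. This completes the proof.
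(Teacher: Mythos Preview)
Your proof is correct and is precisely what the paper intends: it states only that the proposition is ``a simple application of Markov's inequality'', and your argument bounding $\mathbb{E}[X]\le n^v p^e$ and then invoking Markov is exactly that.
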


In the other direction, we can use Chebyshev's inequality to establish the existence of subgraphs in $G_{n,p}$ when $p$ is suitably large. More precisely, it follows from Theorem 4.4.5 in The Probabilistic Method by Alon and Spencer~\cite{AS08} that if $p = \omega(n^{-1/m(F)})$, then the number of copies of $F$ in $G_{n,p}$ is concentrated around its expectation.

\begin{proposition} \label{prop:manysubgraphs}
Given a graph $F$ on $v$ vertices and a constant $\zeta > 0$, let $\mc F$ be a collection of $\zeta n^v$ potential copies of $F$.  If $p = \omega_n n^{-1/m(F)}$ with some $\omega_n=\omega(1)$, then the probability that $G_{n,p}$ does not contain a copy of $F$ from $\mc F$ is at most $\frac{v! 2^v}{\zeta \omega_n}$.
\end{proposition}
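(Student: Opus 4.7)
The plan is to apply Chebyshev's inequality (the second-moment method) to the random variable $X := \sum_{F' \in \mc F} I_{F'}$, where $I_{F'}$ is the indicator that the potential copy $F'$ is contained in $G_{n,p}$, and to use $\Pr[X = 0] \le \op{Var}(X)/\mu^2$ with $\mu := \EE[X]$. Since $\mu = \zeta n^v p^{e(F)}$ and $e(F)/v \le m(F)$, the density hypothesis $p \ge \omega_n n^{-1/m(F)}$ yields $\mu \ge \zeta \omega_n^{e(F)}$, so in particular $1/\mu \le 1/(\zeta \omega_n)$.

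For the variance, I would exploit that two copies of $F$ with disjoint edge sets appear independently in $G_{n,p}$, even when they share vertices, so
\begin{equation*}
\op{Var}(X) \le \mu + \sum_{\substack{F' \ne F'' \in \mc F \\ E(F') \cap E(F'') \ne \emptyset}} \Pr[I_{F'} = I_{F''} = 1].
\end{equation*}
The remaining pairs I would organise according to the subgraph $J \subseteq F$ representing $F' \cap F''$, with $e(J) \ge 1$. A standard count bounds the number of ordered pairs with a given $J$ by $|\mc F| \cdot v! \cdot n^{v - v(J)}$: fix $F' \in \mc F$, pick a labelling of $V(F)$ onto the vertex set of $F''$ extending the shared copy of $J$, and then choose the remaining $v - v(J)$ vertices of $F''$ from $[n]$. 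The joint probability of containment is $p^{2e(F) - e(J)}$.

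Dividing by $\mu^2 = \zeta^2 n^{2v} p^{2e(F)}$, the factor $|\mc F| = \zeta n^v$ cancels one of the $\zeta n^v$ factors, so the contribution of each $J$ reduces to $v!/\bigl(\zeta\, n^{v(J)} p^{e(J)}\bigr)$. The key input is that $p \ge \omega_n n^{-1/m(F)}$ combined with $d(J) = e(J)/v(J) \le m(F)$ gives $n^{v(J)} p^{e(J)} \ge \omega_n^{e(J)} \ge \omega_n$ whenever $e(J) \ge 1$, so each term contributes at most $v!/(\zeta\omega_n)$. Summing over the at most $2^v$ vertex subsets of $V(F)$ that determine the relevant $J$, and absorbing the negligible $1/\mu$ term, one arrives at
\begin{equation*}
\Pr[X = 0] \le \frac{\op{Var}(X)}{\mu^2} \le \frac{v!\, 2^v}{\zeta\, \omega_n},
\end{equation*}
as required.

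The main technical obstacle is the combinatorial bookkeeping: getting exactly the constant $v!\cdot 2^v$ and a single factor of $\zeta$ (rather than $\zeta^2$) in the denominator relies on pairing the outer factor $|\mc F| \le \zeta n^v$ against precisely one of the $\zeta n^v$ factors in $\mu^2$, and then letting the density hypothesis supply the decisive $\omega_n^{-1}$ decay in each individual $J$-term. The remainder of the argument is a direct invocation of Chebyshev's inequality, and closely mirrors the proof of Theorem 4.4.5 in Alon–Spencer.
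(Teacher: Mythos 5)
Your proposal is correct and is exactly the argument the paper has in mind: the paper does not write out a proof but explicitly attributes the proposition to Chebyshev's inequality via Theorem 4.4.5 of Alon--Spencer, which is the second-moment computation you carry out. Your bookkeeping (grouping intersecting pairs by the shared subgraph $J$, using $e(J)/v(J) \le m(F)$ to get $n^{v(J)}p^{e(J)} \ge \omega_n$, and cancelling one factor of $\zeta n^v$ against $\mu^2$) is sound and yields the stated bound $v!\,2^v/(\zeta\omega_n)$.
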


If the edge probability $p$ is even larger, then the following result, a consequence of Theorem~3.29 from the book Random Graphs by Janson, {\L}uczak and Ruci\'nski~\cite{JLR00}, shows that there will be many pairwise edge-disjoint copies of $H$ in $G_{n,p}$.

\begin{proposition} \label{prop:disjointcopies}
For every graph $H$ with $m_2(H) > 1$, there is a constant $\kappa = \kappa(H)$ such that, given constants $\rho, c > 0$ and setting $p = c n^{-1/m_2(H)}$, with high probability every induced subgraph of $G_{n,p}$ on at least $\tfrac12 \rho n$ vertices contains at least $\kappa c^{e(H) - 1} \rho^{v(H)} n^2 p$ edge-disjoint copies of $H$.
\end{proposition}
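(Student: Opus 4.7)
The statement is effectively a packing result: at the Ramsey threshold, every large induced subgraph of $G_{n,p}$ should contain a near-optimal family of $\Theta(n^2p)$ edge-disjoint copies of $H$. I would split the argument into a counting step, followed by an extraction step that converts the abundance of copies into an edge-disjoint subfamily.

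Fix $W\subseteq V(G_{n,p})$ with $|W|\geq \tfrac12 \rho n$, and let $X_W$ denote the number of copies of $H$ in $G_{n,p}[W]$. By linearity of expectation, $\EE[X_W]\geq c_1 \rho^{v(H)} n^{v(H)} p^{e(H)}$, and a short computation using the inequality $m_2(H)\geq d_2(H)=(e(H)-1)/(v(H)-2)$, which holds by definition of $m_2(H)$, yields $n^{v(H)}p^{e(H)}\geq c^{e(H)-1} n^2 p$. Hence $\EE[X_W]\geq c_2 \rho^{v(H)} c^{e(H)-1} n^2 p$, which tends to infinity polynomially in $n$ since $m_2(H)>1$. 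Applying Theorem~3.29 of \cite{JLR00}, a Janson-type lower-tail bound for subgraph counts, I would then conclude $X_W\geq \tfrac12 \EE[X_W]$ with failure probability $\exp(-n^{\Omega(1)})$, strong enough to union-bound over the $2^n$ choices of $W$.

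To pass from many copies to many edge-disjoint copies, I would form the conflict graph $\Lambda_W$ on the copies of $H$ in $G_{n,p}[W]$, joining two copies whenever they share at least one edge. Summing over nonempty intersection subgraphs $F\subseteq H$, the expected number of pairs sharing exactly $F$ is $O(n^{2v(H)-v(F)}p^{2e(H)-e(F)})$; using $d_2(F)\leq m_2(H)$ (which, in the 2-balanced case, amounts to the computation $(e(H)-e(F))/(v(H)-v(F))\geq m_2(H)$), this is $O(\EE[X_W])$, so w.h.p.\ $|E(\Lambda_W)|=O(\EE[X_W])$. Caro--Wei then produces an independent set in $\Lambda_W$ of size $X_W^2/(X_W+2|E(\Lambda_W)|)=\Omega(\EE[X_W])=\Omega(\rho^{v(H)} c^{e(H)-1} n^2 p)$, which is the desired edge-disjoint family.

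The main obstacle is ensuring that both the Janson concentration and the conflict-pair bound are tight enough that a union bound over the exponentially many induced subgraphs $W$ survives. The key ingredient is that at $p=c n^{-1/m_2(H)}$ the variance parameter in Janson's inequality is of the same order as the mean, which in the 2-balanced case is automatic from the density inequality $d_2(F)\leq d_2(H)$ for every $F\subseteq H$; for general $H$ with $m_2(H)>1$, copies may cluster near denser subgraphs of $H$, and one has to truncate to a "typical" subfamily of copies with bounded codegrees—this is precisely where the refined form of Theorem~3.29 of \cite{JLR00} does the heavy lifting.
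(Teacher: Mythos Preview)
The paper does not actually give a proof of this proposition; it is simply stated as ``a consequence of Theorem~3.29'' of Janson--\L uczak--Ruci\'nski~\cite{JLR00}, with no further argument.  Your sketch therefore goes further than the paper does, and it correctly identifies the same external reference as the source of the result.

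The one real gap in your outline is in the extraction step.  You write that ``w.h.p.\ $|E(\Lambda_W)|=O(\EE[X_W])$'', but this is an \emph{upper}-tail statement for a subgraph count, and Markov's inequality only delivers it for a single fixed $W$, not with the $\exp(-n^{\Omega(1)})$ failure probability you would need to union-bound over all $2^n$ subsets $W$.  A cheap fix is to bound $|E(\Lambda_W)|$ once and for all by the global conflict count $|E(\Lambda_{[n]})|$, a single random variable controlled by one application of Markov; Caro--Wei then gives an independent set of size
\[
\frac{X_W^2}{X_W + 2|E(\Lambda_{[n]})|} \;=\; \Omega\!\left(\frac{\mu_W^2}{\mu_{[n]}}\right) \;=\; \Omega\!\left(\rho^{2v(H)} c^{e(H)-1} n^2 p\right),
\]
which has the wrong power of $\rho$ (it gives $\rho^{2v(H)}$ rather than the stated $\rho^{v(H)}$).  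This weaker bound would still suffice for all of the paper's downstream uses---it just changes the polynomial $f(\rho)$ in Lemma~\ref{lem:largedegrees}---but it does not prove the proposition as stated.  The truncation to ``typical'' copies that you allude to in your final paragraph is precisely how the JLR argument avoids this loss: one first restricts to copies of $H$ whose edges have bounded codegree, so that the packing and concentration can be carried out together rather than via a separate global upper-tail estimate.
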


\subsubsection{Sparse regularity and counting}

Given an $n$-vertex graph $G$, two disjoint sets of vertices $X$ and $Y$ form an \emph{$(\eps,p)$-regular pair of density $d$} if $d(X,Y) = d$ and, for all $X' \subseteq X$ with $\abs{X'} \ge \eps \abs{X}$ and $Y' \subseteq Y$ with $\abs{Y'} \ge \eps \abs{Y}$, we have $\abs{d(X',Y') - d(X,Y)} < \eps p$,
where $d(U,V)$ denotes $\frac{e(U,V)}{|U||V|}$.  This notion of regularity is inherited by induced and random subgraphs (see Lemma 4.3 in~\cite{GS05}).

\begin{proposition} \label{prop:slicing}
Suppose that $c \in (0, \tfrac12]$, $G$ is a graph and $U, W$ are disjoint vertex sets, both of size $N$, with $(U,W)$ an $(\eps, p)$-regular pair of density $d = \omega(N^{-1})$.  Then the following is true:
\begin{itemize}
	\item[(i)] for $X \subset U$ and $Y \subset W$ with $\abs{X}, \abs{Y} \ge c N$, the pair $(X,Y)$ is $(\eps / c,p)$-regular with density at least $d - \eps p$ and
	\item[(ii)] for $m \ge cdN^2$, the subgraph $G'$ of $G$ obtained by choosing $m$ edges from $G[U,W]$ uniformly at random forms a $(2 \eps, p)$-regular pair with high probability.
\end{itemize}
\end{proposition}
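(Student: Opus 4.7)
The plan is to deduce (i) directly from the definitions and to prove (ii) by combining a sharp concentration bound for hypergeometric random variables with a union bound over sub-pairs.

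For (i), suppose $X \subseteq U$ and $Y \subseteq W$ satisfy $\abs{X}, \abs{Y} \ge cN$. If $\eps/c \ge 1$ the conclusion is either vacuous or trivial, so we may assume $\eps < c$. Since $cN \ge \eps N$, the pair $(X,Y)$ itself qualifies as a large sub-pair of $(U,W)$, so $(\eps,p)$-regularity of $(U,W)$ gives $\abs{d(X,Y) - d} < \eps p$, which in particular yields $d(X,Y) \ge d - \eps p$. For the regularity claim, take any $X' \subseteq X$ and $Y' \subseteq Y$ with $\abs{X'} \ge (\eps/c)\abs{X}$ and $\abs{Y'} \ge (\eps/c)\abs{Y}$; then $\abs{X'}, \abs{Y'} \ge \eps N$, so applying $(\eps,p)$-regularity of $(U,W)$ gives $\abs{d(X',Y') - d} < \eps p$. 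Combined with the previous bound via the triangle inequality, $\abs{d(X',Y') - d(X,Y)} < 2\eps p \le (\eps/c)p$, where the final step uses the hypothesis $c \le \tfrac12$.

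For (ii), fix $X' \subseteq U$ and $Y' \subseteq W$ with $\abs{X'}, \abs{Y'} \ge 2\eps N$, and write $M = e_G(U,W) = dN^2$ and $A = e_G(X',Y')$. By $(\eps,p)$-regularity, $\abs{A - d\abs{X'}\abs{Y'}} \le \eps p \abs{X'}\abs{Y'}$. The count $e_{G'}(X',Y')$ is hypergeometric with population $M$, success count $A$ and sample size $m$, hence mean $\mu = mA/M$; in particular, the expected density of $G'$ on $(X',Y')$ equals $(m/N^2)(d(X',Y')/d)$, which differs from $d_{G'}(U,W) = m/N^2$ by at most $\eps p \cdot m/(dN^2) \le \eps p$, since $m \le dN^2$. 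It therefore suffices to prove that with high probability $\abs{e_{G'}(X',Y') - \mu} \le \eps p \abs{X'}\abs{Y'}$ holds simultaneously for every admissible pair $(X',Y')$; a Chernoff--Hoeffding bound for hypergeometric random variables (e.g.~Theorem~2.10 of~\cite{JLR00}) gives
\[
\Pr\sqb{\abs{e_{G'}(X',Y') - \mu} > \eps p \abs{X'}\abs{Y'}} \le 2\exp\paren{-c_1 \eps^2 p^2 \abs{X'}\abs{Y'} \cdot m/M}
\]
for some absolute constant $c_1 > 0$, and a union bound over the at most $4^N$ admissible pairs then closes the argument provided this exponent dominates $N$.

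The main obstacle is ensuring that the concentration is sharp enough to survive the $2^{O(N)}$ union bound in the sparse regime: the tolerated deviation is only of order $\eps p$, which may be far smaller than a constant, so the Chernoff exponent must be made $\omega(N)$ by combining the bounds $\abs{X'}\abs{Y'} \ge 4\eps^2 N^2$ and $m/M \ge c$ with the scale of $p$. In the setting where the proposition is applied, $p = \Theta(n^{-1/m_2(H)})$ with $m_2(H) > 1$, so $p^2 N \to \infty$ and the exponent is indeed $\omega(N)$; alternatively, one may simply invoke Lemma~4.3 of~\cite{GS05}, which is precisely the inheritance statement claimed here and where this bookkeeping is carried out in full generality.
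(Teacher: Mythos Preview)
The paper does not give its own proof of this proposition: it simply records it as a known fact and cites Lemma~4.3 of~\cite{GS05}. Your closing sentence, invoking that lemma directly, is therefore exactly the paper's route, and your argument for part~(i) is correct and standard.

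The self-contained sketch you offer for (ii), however, has a quantitative gap. The exponent you display, $c_1\eps^2 p^2\abs{X'}\abs{Y'}\cdot m/M$, is not what the hypergeometric Chernoff--Hoeffding bounds actually give: with $t=\eps p\abs{X'}\abs{Y'}$ and $\mu=(m/N^2)\abs{X'}\abs{Y'}$, the bound $\exp(-t^2/(3\mu))$ (or the Hoeffding form $\exp(-2t^2/m)$) yields an exponent of order $\eps^2 p^2\abs{X'}\abs{Y'}\cdot N^2/m$, which is larger than yours by a factor of roughly $1/d$. This difference matters, because with your exponent the $4^N$ union bound forces $p^2N\to\infty$, and your assertion that this follows from $m_2(H)>1$ is \emph{false}: for $H=K_3$ one has $m_2(H)=2$, $p=\Theta(n^{-1/2})$ and $p^2N=\Theta(1)$, while for any cycle $C_k$ with $k\ge 4$ one even has $p^2N\to 0$. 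With the correct exponent one only needs $pN\to\infty$, which in the applications (where $d=\Theta(p)$) does follow from $d=\omega(N^{-1})$. So the strategy is right, but the bookkeeping as written does not close; the citation to~\cite{GS05} is the safe resolution, and is precisely what the paper does.
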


An \emph{$(\eps, p)$-regular partition} $\mc P$ of $G$ is a partition $V(G) = V_0 \cup V_1 \cup \dots \cup V_k$ such that $\abs{V_0} \le \eps n$, $\abs{V_1} = \abs{V_2} = \dots = \abs{V_k}$ and all but at most $\eps k^2$ pairs $(V_i, V_j)$, $1 \le i < j \le k$, are $(\eps,p)$-regular.  When the graph $G$ is edge-coloured, we say a partition is \emph{$(\eps,p)$-regular} if for all but at most $\eps k^2$ pairs of parts the edges of each colour between the two parts form an $(\eps,p)$-regular subgraph. If $G$ has density $d$, we say it is \emph{$(\eta, D)$-upper-uniform} if, for all disjoint sets $X$ and $Y$ of size at least $\eta n$, we have $d(X,Y) \le D d$.  With these definitions in place, we may state a version of the sparse regularity lemma, originally due to Kohayakawa and R\"odl~\cite{K97}.

\begin{theorem} \label{thm:sparsereg}
For all $\eps, D > 0$ and $r, t \in \mathbb{N}$, there are $\eta > 0$ and $T \in \mathbb{N}$ such that every $r$-colouring of the edges of an $(\eta, D)$-upper-uniform graph $G$ of density $d$ on at least $T$ vertices has an $(\eps,d)$-regular partition $\mc P$ with $k$ parts for some $k \in [t,T]$.
\end{theorem}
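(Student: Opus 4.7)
The plan is to adapt Szemer\'edi's classical energy-increment proof of the regularity lemma to the sparse, multi-coloured setting. The key invariant is a normalized mean-square density (an ``energy'') attached to a partition, while the role of the $(\eta, D)$-upper-uniformity hypothesis is exactly to cap the normalized densities between any two sufficiently large parts and thereby keep the energy bounded above independently of how far the iteration is pushed.

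Concretely, for a partition $\mc P = \{V_0, V_1, \dots, V_k\}$ with $\abs{V_0} \le \eps n$ and $\abs{V_1} = \cdots = \abs{V_k}$, and for each colour class $G_c$ ($c \in [r]$), I would introduce
\[ q_c(\mc P) = \frac{1}{n^2} \sum_{1 \le i < j \le k} \abs{V_i}\abs{V_j} \paren{ \frac{d_c(V_i, V_j)}{d} }^2 \quad \text{and} \quad q(\mc P) = \sum_{c=1}^r q_c(\mc P). \]
Choosing $\eta$ much smaller than the reciprocal of the eventual maximum number of parts, the $(\eta,D)$-upper-uniformity hypothesis forces $d(V_i, V_j) \le D d$ for every pair of parts of the partition, hence $d_c(V_i, V_j)/d \le D$ for each colour $c$, and so $q(\mc P) \le r D^2$ uniformly throughout the procedure.

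The defect Cauchy--Schwarz inequality then yields the two standard conclusions driving the proof: first, passing to a refinement can only increase $q$; second, whenever some colour class $G_c$ exhibits more than $\eps k^2$ pairs $(V_i, V_j)$ that fail to be $(\eps, d)$-regular, one can take a common refinement using the ``witness'' subsets $X \subseteq V_i$, $Y \subseteq V_j$ of these irregular pairs to obtain $\mc P'$ with $q(\mc P') \ge q(\mc P) + \eps^5$. Starting from an arbitrary equitable partition with at least $t$ parts and iterating, the energy bound $q \le r D^2$ forces termination after at most $r D^2 \eps^{-5}$ steps, at which point the resulting partition is $(\eps, d)$-regular simultaneously in all $r$ colours and has a number of parts $k$ trapped between $t$ and a tower-type bound $T$.

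The main obstacle is purely technical bookkeeping rather than conceptual. At each refinement step one must re-equitize the parts, using the standard trick of dumping leftover vertices into $V_0$ while keeping $\abs{V_0} \le \eps n$; and one must simultaneously guarantee that the parts remain of size at least $\eta n$ so that the upper-uniformity bound, and hence the uniform cap on $q$, continues to apply. Both requirements are met by choosing $T$ to dominate the maximum number of parts produced across all possible refinements and then setting $\eta = 1/(2T)$, after which the standard Szemer\'edi argument carries through essentially unchanged once applied colour by colour.
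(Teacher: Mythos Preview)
The paper does not prove this theorem at all: it is stated as a known result, attributed to Kohayakawa and R\"odl with a citation to~\cite{K97}, and then used as a black box. Your proposal, on the other hand, sketches the standard energy-increment proof of the sparse regularity lemma, which is indeed how the result is established in the original source. The sketch is correct in outline and captures exactly the role of the $(\eta,D)$-upper-uniformity hypothesis (bounding the normalised densities so that the energy functional stays below $rD^2$), so there is nothing to fault conceptually; but be aware that you are supplying a proof where the paper simply quotes the result.
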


The final ingredient we will need is a sparse counting lemma due to Conlon, Gowers, Samotij and Schacht~\cite{CGSS14}.  Given a graph $H$, integers $N$ and $m$, and $\eps, p > 0$, we define the family $\mc G(H,N,m,p,\eps)$ to be all graphs obtained by replacing each vertex of $H$ by an independent set of size $N$ and replacing each edge of $H$ by an $(\eps,p)$-regular bipartite graph with exactly $m$ edges.  Given such a graph $G$, let $G(H)$ denote the number of canonical copies of $H$ in $G$ (by which we mean that each vertex of $H$ in the copy belongs to the corresponding independent set in $G$).

\begin{theorem} \label{thm:sparsecount}
For every graph $H$ and every $d > 0$, there exist $\eps, \xi > 0$ with the following property.  For every $\eta > 0$, there is $C > 0$ such that if $p \ge C n^{-1/m_2(H)}$, then, with high probability, for every $N \ge \eta n$, $m \ge d p N^2$ and every subgraph $G$ of $G_{n,p}$ in $\mc G(H,N,m,p,\eps)$, $G(H) \ge \xi N^{v(H)} \left( \frac{m}{N^2} \right)^{e(H)}$.
\end{theorem}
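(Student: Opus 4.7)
The plan is to establish the counting lemma by induction on $v(H)$, combined with a concentration argument showing that atypical blow-up configurations are very rare in $G_{n,p}$. Fix an ordering $v_1,\dots,v_k$ of $V(H)$ in which each $v_i$ with $i \ge 2$ has a neighbour in $\{v_1,\dots,v_{i-1}\}$, and write $V_1,\dots,V_k$ for the corresponding parts of size $N$. At each step $i$, given an embedding of $v_1,\dots,v_{i-1}$, count the number of valid choices for the image of $v_i$ in $V_i$; the goal is to show that, for all but a tiny fraction of partial embeddings, this count is at least a constant multiple of $(m/N^2)^{|D|} N$, where $D \subseteq \{1,\dots,i-1\}$ indexes the already-embedded neighbours of $v_i$. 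Multiplying through these steps produces the claimed lower bound $\xi N^{v(H)}(m/N^2)^{e(H)}$, with $\xi$ depending only on $H$ and $d$.

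The first ingredient is regularity itself: for a single $j \in D$, the $(\eps,p)$-regularity of $(V_j,V_i)$ together with slicing (Proposition~\ref{prop:slicing}) controls the neighbourhood size of a typical vertex in $V_j$. The genuinely hard part, however, is \emph{joint} regularity, since the intersection of $|D|$ such neighbourhoods could a priori be much smaller than the product of their densities. To control this, one exploits that in $G_{n,p}$, for every subgraph $F \subseteq H$, the number of copies of $F$ is concentrated around its expectation: because $p \ge C n^{-1/m_2(H)}$, every such expectation is polynomially large with exponent governed by $m_2(H)$, so a moment calculation delivers the required typical counts and, via a deletion step, an upper bound on the number of partial embeddings whose joint neighbourhoods are defective.

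The main obstacle is quantifying how rare these defective partial embeddings are. The required statement is an exponential (in $N$) bound on the number of bad $N$-blow-ups of $H$ that can appear inside $G_{n,p}$, which is essentially the content of the Kohayakawa--\L{}uczak--R\"odl conjecture. One can establish this either by the container method or by the iterative embedding plus deletion argument of Conlon, Gowers, Samotij and Schacht~\cite{CGSS14}; in either formulation, the restriction $p \ge C n^{-1/m_2(H)}$ is precisely what makes the relevant union bound succeed, with the worst subgraph density $m_2(H')$ of a subgraph $H' \subseteq H$ determining the threshold. Once such an exponential bound is in place, a union bound over all bad blow-up configurations shows that with high probability $G_{n,p}$ contains none, so every $G \in \mc G(H,N,m,p,\eps)$ sitting inside $G_{n,p}$ is good, and the inductive counting sketched above yields the theorem.
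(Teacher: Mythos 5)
The paper does not supply a proof of Theorem~\ref{thm:sparsecount}: it is imported verbatim as a black box, attributed to Conlon, Gowers, Samotij and Schacht~\cite{CGSS14}, where it appears as the counting form of the K\L R conjecture. Your sketch correctly identifies the provenance, the threshold $p \ge C n^{-1/m_2(H)}$ and why $m_2(H)$ is the governing density, and the real source of difficulty, namely that $(\eps,p)$-regularity alone gives no a priori control over the intersection of several sparse neighbourhoods. In that sense the proposal is pointing in the right direction.

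It does not, however, constitute a proof. The step you call ``the required statement'' -- an exponential bound on the number of bad $N$-blow-ups that can sit inside $G_{n,p}$ -- is not an ingredient one invokes on the way to Theorem~\ref{thm:sparsecount}; it \emph{is} the main theorem of~\cite{CGSS14}, and Theorem~\ref{thm:sparsecount} is extracted from it. Saying ``one can establish this either by the container method or by the iterative embedding plus deletion argument of Conlon, Gowers, Samotij and Schacht'' therefore closes the loop by citing the very result to be proved. In addition, the inductive one-vertex-at-a-time extension scheme you set up in the first paragraph is the standard proof of the \emph{dense} counting lemma, and it is exactly this scheme that breaks in the sparse regime: a constant proportion of partial embeddings can have degenerate joint neighbourhoods, repeated application of the slicing lemma (Proposition~\ref{prop:slicing}) loses the error term, and one cannot patch this with a simple second-moment-plus-deletion argument without already having the KLR-type exponential bound in hand. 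The honest structure of the argument in~\cite{CGSS14} is not a greedy embedding but a more global probabilistic one. Since the paper itself treats the result as external, the appropriate move is to cite~\cite{CGSS14} directly rather than attempt a reproof; if a sketch is wanted, it should reflect the actual container/iteration strategy rather than the dense-case heuristic.
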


\subsection{The reduced graph}

With these preliminaries in hand, we can proceed with the proof of Theorem~\ref{thm:main}.  We begin by describing the (standard) construction of the reduced graph and proving it has some useful properties.

Let $t, \eps, \alpha$ be defined such that $1/t \le \eps \ll \alpha \ll \kappa, c$, where $\kappa = \kappa(H)$ is the constant from Proposition~\ref{prop:disjointcopies}, $p = cn^{-1/m_2(H)}$ and `$\ll$' means these parameters are sufficiently small for the subsequent calculations to hold.

Now consider a monochromatic-$H$-free $3$-edge-colouring $\varphi$ of the edges of $G \sim G_{n,p}$ (where, as in the case of $\varphi_2$, we may only be using two of the three colours) and let $\gred$, $\gblue$ and $\ggreen$ represent the red, blue and green subgraphs of $G$, respectively. Given our choice of $\eps$ and $t$ and setting $r = 3$ and $D = 4$, let $\eta$ and $T$ be as in Theorem~\ref{thm:sparsereg}.  Proposition~\ref{prop:upperuniform} shows that $G$ is with high probability $(\eta, 4)$-upper-uniform.  Hence, there is an $(\eps,p)$-regular partition $V(G) = V_0 \cup V_1 \cup \hdots \cup V_k$, where $t \le k \le T$.

We next define three graphs, $\Gred$, $\Gblue$ and $\Ggreen$, on the same vertex set $[k]$.  $\Gred$ has an edge between $i$ and $j$ if and only if the bipartite induced subgraph $\gred[V_i,V_j]$ forms an $(\eps,p)$-regular pair of density at least $\alpha p$, with $\Gblue$ and $\Ggreen$ defined similarly with respect to $\gblue$ and $\ggreen$, respectively.  The reduced (multi)graph $\G$ is the coloured union of $\Gred$ in red, $\Gblue$ in blue and $\Ggreen$ in green.  Given a vertex $i \in [k]$, we write $N_{red}(i)$, $N_{blue}(i)$ and $N_{green}(i)$ for its neighbourhoods in $\Gred$, $\Gblue$ and $\Ggreen$, respectively, and write $d_{red}(i), d_{blue}(i)$ and $d_{green}(i)$ for the sizes of these sets.

\medskip

We first show that any induced subgraph of $\G$ with linearly many vertices has a vertex with large degree in at least two of the colours.

\begin{lemma} \label{lem:largedegrees}
Define $f(\rho) = \tfrac{1}{24} \kappa c^{e(H)-1} \rho^{v(H) - 1}$.  Suppose $\rho$ satisfies
\begin{equation} \label{ineq:rho}
	6 \rho f(\rho) \ge 3 \eps + \tfrac12 \alpha.
\end{equation}
Then, with high probability, for any subset $U \subseteq [k]$ of $\rho k$ vertices of the reduced graph $\Gamma$, we can find a vertex $u \in U$, two disjoint sets $X_1, X_2 \subset U$ of size at least $f(\rho) k$ and two distinct colours $\chi_1, \chi_2$ such that, for each $i \in [2]$, $u$ is adjacent to all vertices in $X_i$ with edges of colour $\chi_i$.
\end{lemma}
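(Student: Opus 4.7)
The plan is to exploit the many edge-disjoint copies of $H$ guaranteed inside $G[U']$, where $U' := \bigcup_{i \in U} V_i$ satisfies $|U'| \ge (1-\eps)\rho n \ge \tfrac{\rho n}{2}$. Proposition~\ref{prop:disjointcopies}, applied to $U'$, provides with high probability at least $\kappa c^{e(H)-1}\rho^{v(H)}n^2 p = 24\rho f(\rho) n^2 p$ edge-disjoint copies of $H$ in $G[U']$, and since $\varphi$ is monochromatic-$H$-free each such copy uses at least two colours.

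First, I would classify edges of $G[U']$ as \emph{good} or \emph{bad}: an edge is bad if it lies inside some $V_i$, in a pair that fails to be colour-regular, or in a colour whose density on its pair is below $\alpha p$; the good edges are precisely those reflected in $\Gamma$. Proposition~\ref{prop:upperuniform}, together with the fact that only $\binom{\rho k}{2}$ pairs sit inside $U$, bounds the total number of bad edges by $(3\eps + \tfrac{3}{2}\alpha\rho^2)n^2 p$. Because the copies are edge-disjoint, each bad edge spoils at most one copy, and the hypothesis $6\rho f(\rho) \ge 3\eps + \tfrac{1}{2}\alpha$ is exactly what is needed to conclude that a constant fraction of the copies are good (all their edges lie in heavy, colour-regular pairs). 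Further restricting to copies whose $v(H)$ vertices land in pairwise distinct parts (all but an $O(1/k)$-fraction when $k$ is large), each remaining good copy projects injectively to a coloured copy of $H$ in $\Gamma[U]$, each of whose edges retains its actual colour. A standard upper bound of $O((n/k)^{v(H)} p^{e(H)})$ on the number of $G$-copies per reduced coloured copy, via Proposition~\ref{prop:fewsubgraphs}, combined with the $2$-balanced identity $(e(H)-1)/m_2(H) = v(H)-2$, then yields at least $c_1 \rho^{v(H)} k^{v(H)}$ non-monochromatic coloured copies of $H$ inside $\Gamma[U]$ for some absolute constant $c_1 > 0$.

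Since $H$ is connected (as in the applications of the lemma), every non-monochromatic coloured copy in $\Gamma[U]$ has a vertex incident to edges of two different colours. Counting (copy, $2$-coloured vertex, unordered colour pair)-triples and averaging over the $\rho k$ vertex choices and $3$ colour pairs produces some $u^* \in U$ and pair $\{\chi_1,\chi_2\}$ such that at least $c_1\rho^{v(H)-1}k^{v(H)-1}/3$ of the reduced copies feature $u^*$ as their $\{\chi_1,\chi_2\}$-vertex. Each such copy supplies a coloured cherry $(j, u^*, j')$ with $j \ne j'$, $u^*j \in E(\Gamma_{\chi_1})$ and $u^*j' \in E(\Gamma_{\chi_2})$. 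Since extending a fixed cherry to a full reduced $H$-copy requires specifying the remaining $v(H)-3$ vertices in $U$ and colouring the remaining $e(H)-2$ edges, there are at most $(\rho k)^{v(H)-3}\cdot 3^{e(H)-2}$ extensions per cherry, so at least $M := c_2 \rho^2 k^2$ distinct cherries occur at $u^*$ for some constant $c_2 > 0$. In particular,
\[
d_{\chi_1}(u^*,U)\cdot d_{\chi_2}(u^*,U) \ge M.
\]

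Finally, this product bound is converted into two large individual neighbourhoods. If both $d_{\chi_i}(u^*,U) < 2f(\rho)k$, the product would be below $4f(\rho)^2 k^2$, which---under the hierarchy $\alpha \ll \kappa, c$ and the lower bound on $\rho$ forced by the hypothesis---is strictly smaller than $M$, a contradiction. Hence, after relabelling, $d_{\chi_1}(u^*,U) \ge 2f(\rho)k$, and substituting the trivial bound $d_{\chi_1}(u^*,U) \le \rho k$ into the product inequality gives $d_{\chi_2}(u^*,U) \ge c_2 \rho k \ge f(\rho)k$ by the same hierarchy. Because $|N_{\chi_1}(u^*,U) \cup N_{\chi_2}(u^*,U)| \ge d_{\chi_1}(u^*,U) \ge 2f(\rho)k$, the required disjoint subsets $X_1 \subset N_{\chi_1}(u^*,U)$ and $X_2 \subset N_{\chi_2}(u^*,U)$ of size $f(\rho)k$ can then be extracted greedily: fill $X_1$ first from $N_{\chi_1}\setminus N_{\chi_2}$, supplementing from $N_{\chi_1}\cap N_{\chi_2}$ only when forced, and take $X_2$ from the remainder of $N_{\chi_2}$. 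The main obstacle I expect is the tight accounting of bad edges in the first step: the $\rho^2$ saving on the light-colour contribution is essential, as otherwise the precise hypothesis $6\rho f(\rho) \ge 3\eps + \tfrac{1}{2}\alpha$ would not preserve a constant fraction of the copies and the remainder of the argument would have no input to work with.
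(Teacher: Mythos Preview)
Your approach differs substantially from the paper's, and it has a quantitative gap in the final step that cannot be closed.

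The paper never attempts to count copies of $H$ in the reduced graph.  From the $24\rho f(\rho)n^2p$ edge-disjoint non-monochromatic copies of $H$ in $G[W]$ it extracts only the crude consequence that two of the three colours, say red and blue, each contribute at least $12\rho f(\rho)n^2p$ edges to $G[W]$.  After discarding bad edges (as you do), this yields at least $3\rho f(\rho)k^2$ edges in each of $\Gamma_{red}[U]$ and $\Gamma_{blue}[U]$.  A degree-averaging argument then produces sets $A,B\subseteq U$ of size $\ge 4f(\rho)k$ of vertices with red- and blue-degree at least $2f(\rho)k$ respectively; if $A\cap B\neq\emptyset$ any common vertex works, and otherwise a short density argument between $A$ and $B$ (each pair of parts has a majority colour of density $\ge p/6>\alpha p$) locates the required $u$.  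The constant $f(\rho)$ is carried through every step without loss.

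In your argument, by contrast, the decisive inequality is $c_2\rho\ge f(\rho)$, i.e.\ $c_2\ge\tfrac{1}{24}\kappa c^{e(H)-1}\rho^{v(H)-2}$.  Tracing your constants: $c_2=c_1/3^{e(H)-1}$, and $c_1\le\kappa/(4K)$ where $K\ge1$ is the Markov constant (even granting that such a bound holds uniformly over all partitions, which Proposition~\ref{prop:fewsubgraphs} does not give, since the parts $V_i$ depend on $G$).  Indeed, the factor $c^{e(H)-1}$ in the good-copy count cancels exactly against the $p^{e(H)-1}$ in the per-reduced-copy upper bound via the identity $n^2p = c^{1-e(H)}n^{v(H)}p^{e(H)}$, so $c_1$ and $c_2$ are independent of $c$.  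Thus you need $\kappa/(4K\cdot3^{e(H)-1})\ge\tfrac{1}{24}\kappa c^{e(H)-1}$ already at the first application ($\rho=1$) in Corollary~\ref{cor:twocliques}, forcing $c^{e(H)-1}\le 6/(K\cdot3^{e(H)-1})$; for $H=K_3$ this fails once $c>3^{-1/2}$, and for larger $H$ even sooner.  Your appeal to the hierarchy $\alpha\ll\kappa,c$ cannot help: after the bad-edge step $\alpha$ has done its work and does not appear in the comparison $c_2\rho\ge f(\rho)$, whose two sides involve only $\kappa$, $c$, $K$ and $H$.
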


\begin{proof}
Let $W = \cup_{u \in U} V_u$ be the vertices in the parts of $G$ corresponding to the vertices of $U$ and note that $\abs{W} \ge (1 - \eps)\rho n \ge \tfrac12 \rho n$.  Hence, by Proposition~\ref{prop:disjointcopies}, we may assume $G[W]$ contains at least $24 \rho f(\rho) n^2 p$ edge-disjoint copies of $H$.  Since there are no monochromatic copies of $H$ in the $3$-edge-colouring of $G$, each such copy must contain two edges of distinct colours.  It easily follows that there are two colours, say red and blue, that each appear on at least $12 \rho f(\rho) n^2 p$ edges of $G[W]$.

Using Proposition~\ref{prop:upperuniform}, we observe that all but at most $(3 \eps + \tfrac12 \alpha) n^2 p$ red edges of $G[W]$ are contained within dense $(\eps,p)$-regular pairs.  Indeed, at most $k \cdot 2 \binom{n/k}{2} p \leq n^2 p/k \leq \eps n^2 p$ edges can be contained within the parts $V_i$, at most $\eps k^2 \cdot 2 (n/k)^2 p \leq 2 \eps n^2 p$ edges can be within irregular pairs $(V_i, V_j)$ and at most $\binom{k}{2} \cdot (n/k)^2 \alpha p \leq \frac{1}{2} \alpha n^2 p$ edges are within $(\eps,p)$-regular pairs $(V_i, V_j)$ of density less than~$\alpha p$. From~\eqref{ineq:rho}, it follows that there are at least $6 \rho f(\rho) n^2 p$ red edges in $G[W]$ that are contained in $(\eps,p)$-regular pairs of density at least $\alpha p$.  Again by Proposition~\ref{prop:upperuniform}, each such pair can account for at most $2 (n/k)^2 p$ edges in $G[W]$, so there must be at least $3 \rho f(\rho) k^2$ such pairs, each of which corresponds to an edge of $\Gred[U]$.  By symmetry, we also find at least $3 \rho f(\rho) k^2$ edges in $\Gblue[U]$.

Now let $A = \{ a \in U : d_{red}(a,U) \ge 2 f(\rho) k \}$.  By summing the red degrees of vertices in $U$, distinguishing between those in $A$ and those not, we have 
\[ 6 \rho f(\rho) k^2 \le \rho k \cdot \abs{A} + 2 f(\rho) k \cdot \rho k, \]
from which we deduce that $\abs{A} \ge 4 f(\rho) k$.  Defining $B = \{ b \in U : d_{blue}(b,U) \ge 2 f(\rho) k \}$, we similarly have $\abs{B} \ge 4 f(\rho) k$. If $A \cap B \neq \emptyset$, let $u \in A \cap B$.  Since $d_{red}(u,U), d_{blue}(u,U) \ge 2 f(\rho) k$, we can find the required disjoint sets $X_1$ and $X_2$ of size $f(\rho) k$ of red and blue neighbours, respectively.

Otherwise, for every $a \in A$ and $b \in B$, by Proposition~\ref{prop:upperuniform}, there are at least $\tfrac12 (n/k)^2 p$ edges in $G$ between $V_a$ and $V_b$, so one of the three colours appears on at least $\tfrac16 (n/k)^2 p > \alpha (n/k)^2 p$ edges.  Let $\chi_1$ be the colour that appears most commonly as the majority colour in these $\abs{A} \abs{B}$ pairs.  Ignoring the pairs that give rise to irregular pairs in $\Gamma_{\chi_1}$, it follows that there are at least $\tfrac13 \abs{A} \abs{B} - \eps k^2$ edges in $\Gamma_{\chi_1}$ between $A$ and $B$. Provided $\alpha$ is sufficiently large with respect to $\eps$, \eqref{ineq:rho} and our lower bound on $\abs{A}, \abs{B}$ imply this is at least $\tfrac14 \abs{A} \abs{B}$ edges.

If $\chi_1$ is not red, then take $\chi_2$ to be red and, by averaging, find some $u \in A$ with a set $X_1$ of at least $\tfrac14 \abs{B} \ge f(\rho) k$ neighbours in $B$ in the colour $\chi_1$.  Since $u \in A$, we have $d_{red}(u,U) \ge 2 f(\rho) k$, so we can find a disjoint set $X_2$ of $f(\rho) k$ red neighbours of $u$, as required.  Otherwise, if $\chi_1$ is red, we take $\chi_2$ to be blue.  By the same argument, we can find some $u \in B$ with a set $X_1$ of at least $f(\rho) k$ red neighbours in $A$ and, since $u \in B$, it has large enough degree in $\Gblue[U]$ to guarantee a disjoint set $X_2$ of blue neighbours.
\end{proof}

Through repeated use of this lemma, we can build large multicoloured structures in $\Gamma$.

\begin{corollary} \label{cor:twocliques}
Given $t \in \mathbb{N}$, let $\rho_0 = 1$ and, for $1 \le i \le 2t-2$, let $\rho_i = f(\rho_{i-1})$.  Provided $6 \rho_{2t-3} f(\rho_{2t-3}) \ge 3 \eps + \tfrac12 \alpha$, there is with high probability a vertex $v_0$ of $\Gamma$ contained in two monochromatic $t$-cliques of distinct colours.
\end{corollary}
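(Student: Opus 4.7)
The plan is to build the two monochromatic $t$-cliques sharing $v_0$ by iterating Lemma~\ref{lem:largedegrees} a total of $2t-2$ times, producing a nested sequence of vertex subsets and a sequence of committed colours.

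Set $U_0 = [k]$; for $i = 1, 2, \ldots, 2t-2$, I apply Lemma~\ref{lem:largedegrees} to $U_{i-1}$ (which has size $\rho_{i-1} k$). The lemma's precondition $6\rho_{i-1} f(\rho_{i-1}) \ge 3\eps + \tfrac{1}{2}\alpha$ holds at every step since the sequence $\rho_i$ is decreasing and $\rho_{i-1} \ge \rho_{2t-3}$ for each $i \le 2t-2$. The lemma produces a vertex $v_i \in U_{i-1}$, two disjoint subsets $X_{1,i}, X_{2,i} \subseteq U_{i-1}$ of size $\rho_i k$, and two distinct colours $\chi_{1,i}, \chi_{2,i}$, with $v_i$ adjacent in $\Gamma$ in colour $\chi_{j,i}$ to every vertex of $X_{j,i}$. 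I then select one of the subsets as $U_i$, thereby committing to the corresponding colour $c_i \in \{\chi_{1,i}, \chi_{2,i}\}$. At the end I pick $v_0 \in U_{2t-2}$ arbitrarily, which is possible since $\rho_{2t-2} k \ge 1$.

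By the nesting property, for each $i < j$ we have $v_j \in U_{j-1} \subseteq U_i$, so the edge $v_i v_j$ of $\Gamma$ is in colour $c_i$; similarly $v_i v_0$ is in colour $c_i$ for every $i$. Consequently, a monochromatic $t$-clique in colour $\chi$ containing $v_0$ arises from any index set $I \subseteq \{1, \ldots, 2t-2\}$ with $|I| = t-1$ and $c_i = \chi$ for all $i \in I$, the clique being $\{v_0\} \cup \{v_i : i \in I\}$. Producing two such cliques of distinct colours through $v_0$ therefore reduces to ensuring that some two distinct colours each appear at least $t-1$ times in $(c_1, \ldots, c_{2t-2})$.

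The key pigeonhole observation is this: summing over the $2t-2$ pairs $\{\chi_{1,i}, \chi_{2,i}\}$, each a $2$-subset of a $3$-element palette, the total colour-pair incidence count is $4t-4$. If two of the three colours each participated in fewer than $t-1$ pairs, the remaining colour would have to lie in more than $2t-2$ pairs, impossible. Hence at least two colours each lie in at least $t-1$ pairs, and I exploit this to design an adaptive selection strategy that commits to these two target colours and balances them so that each is chosen exactly $t-1$ times while the third is never chosen.

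The main technical obstacle I expect is the interdependence between steps: the pair produced by the lemma at step $i+1$ depends on the choice of $U_i$, so the pair sequence evolves alongside the commitments. The hardest part of the proof will be verifying that a suitable adaptive strategy achieves the desired balance regardless of how the pair sequence unfolds, which requires a careful case analysis tracking the running tally of colour commitments against the residual pair possibilities.
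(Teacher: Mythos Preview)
Your plan has a genuine gap: the adaptive selection strategy you hope to construct does not exist once $t \ge 3$. In your linear chain the player must finish with two colours each chosen exactly $t-1$ times (and the third chosen $0$ times) out of $2t-2$ choices. But an adversary controlling the offered pairs can always prevent this. After step $2t-3$ the only count profile from which the target $(t-1,t-1,0)$ is still reachable is, up to relabelling, $(t-1,t-2,0)$; from that position the adversary simply offers the pair consisting of the colour with count $t-1$ and the colour with count $0$, and whichever you take leaves you at $(t,t-2,0)$ or $(t-1,t-2,1)$, both losses. Since the colour pair output by Lemma~\ref{lem:largedegrees} is not under your control and you make no argument restricting which pair sequences can actually arise, your proof must cope with this adversary, and it cannot. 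Your pigeonhole observation is valid for a \emph{fixed} sequence of pairs, but as you yourself note the pairs evolve with the choices, and that adaptivity is fatal here.

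The paper sidesteps this obstruction by abandoning the single linear chain. It places the shared vertex at an \emph{early} branching point (called $u_0$, or in some sub-cases $u_1$ or $u_2$) rather than at the terminal set $U_{2t-2}$, and grows the two cliques in \emph{separate} nested sub-chains inside that vertex's two monochromatic neighbourhoods. Within each sub-chain one colour is effectively banned; since any pair of distinct colours from a three-element palette must include at least one unbanned colour, one can always commit to an unbanned colour, and then ordinary two-colour pigeonhole over a chain of length $2t-3$ yields $t-1$ commitments in one of the two unbanned colours. This branching is the missing structural idea; no selection rule on a single chain of length $2t-2$ will repair your argument.
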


\begin{proof}
Applying Lemma~\ref{lem:largedegrees} with $\rho = \rho_0 = 1$ and $U = [k]$, we find a vertex $u_0$ with large degrees in two colours. Without loss of generality, let the colours be red and blue.  In the first stage of this algorithm, we iterate within the red neighbourhood of $u_0$, finding either a vertex in blue and green $t$-cliques, in which case we are done, or a red $t$-clique containing $u_0$.

To start, apply Lemma~\ref{lem:largedegrees} again, this time taking $U$ to be the set of red neighbours of $u_0$.  This gives a vertex with large degrees in two colours.  While one of those colours is red, we repeat the process, giving us a sequence of vertices with large nested red neighbourhoods.  If this sequence (including $u_0$) has length $t-1$, by choosing an arbitrary vertex in the final red neighbourhood, we obtain a red $t$-clique containing $u_0$ and can proceed to the second stage.

Otherwise, after some $h \le t-2$ steps we obtain a vertex $u_1$ that has large blue and green neighbourhoods.  In this case, we first iterate within the blue neighbourhood of $u_1$.  Each subsequent vertex has either a large red neighbourhood or a large blue neighbourhood within which we can proceed.  Once we have obtained a sequence of $2t-3$ vertices, there are either $t-1$ of them (including $u_0$) for which we iterated within a red neighbourhood or $t-1$ of them (including $u_1$) for which we iterated within a blue neighbourhood.  In the first case, we choose an arbitrary vertex in the final neighbourhood to create a red $t$-clique containing $u_0$ and can then proceed to the second stage.

In the second case, choosing an arbitrary vertex in the final neighbourhood gives a blue $t$-clique containing $u_1$.  We can then return to the green neighbourhood of $u_1$ and repeatedly iterate, at each point proceeding with a red or green neighbourhood of the latest vertex.  Once we reach a sequence of length $2t-3$ (including the vertices between $u_0$ and $u_1$), we again either have $t-1$ vertices with green neighbourhoods or $t-1$ vertices with red neighbourhoods.  In the first case, we can complete a green $t$-clique containing $u_1$ that, together with the earlier blue $t$-clique, completes the desired structure.  In the second case, choosing a vertex in the final neighbourhood again completes a red $t$-clique containing $u_0$, with which we proceed to the second stage.

If we proceed to the second stage, we will have already found a red $t$-clique containing $u_0$. The second stage consists of mirroring the above process in the blue neighbourhood of $u_0$. This results in a blue $t$-clique containing $u_0$ or a vertex $u_2$ in the blue neighbourhood that is contained in both red and green $t$-cliques; in either case, we are done.
\end{proof}

\subsection{Building colour-forced structures}

Let $\mc K_n(H)$ denote the family of all copies of $H$ in $K_n$.  Using the cliques from Corollary~\ref{cor:twocliques}, we will prove the following key proposition.

\begin{proposition} \label{prop:colourforced}
There are positive constants $\kappa = \kappa(c,H)$ and $\zeta = \zeta(c,H)$ such that, for any $K > 1$, with probability at least $1 - \kappa K^{-1} - o(1)$, for every monochromatic-$H$-free $3$-edge-colouring $\varphi$ of $G$, there is some colour $\chi$ with at least $\zeta K^{-1} n^{v(H)}$ $\chi$-forced copies of $H$ in $\mc K_n(H)$.
\end{proposition}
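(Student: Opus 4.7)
The plan is to combine the structural information supplied by Corollary~\ref{cor:twocliques} with the sparse counting lemma (Theorem~\ref{thm:sparsecount}) to exhibit many colour-forced copies of $H$. First, I would apply Corollary~\ref{cor:twocliques} to the reduced multigraph $\G$ associated with $G$ and the colouring $\varphi$, obtaining, with high probability, a vertex $v_0 \in V(\G)$ lying in two monochromatic $t$-cliques of distinct colours. By symmetry among the three colours, we may assume these cliques are a red $t$-clique $R$ and a blue $t$-clique $B$; the remaining colour, green, will play the role of $\chi$ in the conclusion.

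The second step is to locate, inside $\G$, a copy of the structure $H \rrprod{2}(H, h)$ in which, for each of the $e(H)$ edges of the central copy of $H$, one of the two attached copies of $H$ (which, after the central edges are deleted, are effectively copies of $H \setminus h$) is monochromatic red and the other monochromatic blue. I would exploit the fact that any $(v(H)-2)$-subset of $R$ together with two designated endpoints in $R$ supplies a monochromatic red copy of $H \setminus h$ rooted at that pair, and similarly inside $B$ for blue; choosing $t$ much larger than $e(H) \cdot v(H)$ allows us to accommodate the central vertices as well as all $2e(H)$ attached copies in the appropriate colours, with $v_0$ serving as the common anchor between the red and blue parts.

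Having identified this structure, I would then apply Theorem~\ref{thm:sparsecount} to each of the $2e(H)$ monochromatic attached copies, viewed as subgraphs of $G$ between the corresponding parts $V_{c_i}$ of the regular partition. For each central pair $\{c_i, c_j\}$, the counting lemma produces $\Omega(p^{e(H)-1} N^{v(H)-2})$ red copies of $H \setminus h$ in $G$ rooted at a constant fraction of the pairs $(x,y) \in V_{c_i} \times V_{c_j}$, and similarly in blue; an averaging argument then shows that a constant fraction of such pairs are simultaneously red and blue bases, i.e. green-forced. A direct counting argument, selecting one vertex from each $V_{c_i}$ subject to all edges of the resulting $H$ being green-forced, then produces $\Omega(n^{v(H)})$ green-forced copies of $H$ in $\mc{K}_n(H)$, giving the required $\zeta K^{-1} n^{v(H)}$ bound. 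The $K^{-1}$ factor and failure probability $\kappa K^{-1}$ arise from an application of Proposition~\ref{prop:fewsubgraphs} ruling out an excessive number of copies of $H \rrprod{2}(H,h)$ or of $H \setminus h$ in $G$ that could otherwise distort the counting.

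The main obstacle will be the embedding in the second paragraph: since $R \cap B = \{v_0\}$, the central vertices of the $H \rrprod{2}(H,h)$-structure cannot all lie in $R \cap B$, so one has to be careful in using $v_0$ as the sole shared anchor when matching red and blue attachments to each central edge. The role of the hypothesis $m_2(H \setminus h) < m_2(H)$ enters precisely here via Lemma~\ref{lem:productdensity}(c), which gives $m_2(H \rrprod{2}(H,h)) < m_2(H)$ and thereby keeps the whole structure below the threshold at which sparse counting breaks down at $p = c n^{-1/m_2(H)}$; this is what allows the $2e(H)$ monochromatic attached copies---half red, half blue---to coexist in $\G$ with the prescribed root edges.
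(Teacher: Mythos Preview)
Your high-level plan---two monochromatic cliques in the reduced graph, an embedding of $H \rrprod{2}(H,h)$, sparse counting, then overcount control via Proposition~\ref{prop:fewsubgraphs}---is correct, and you correctly identify Lemma~\ref{lem:productdensity}(c) as the reason sparse counting applies at $p = cn^{-1/m_2(H)}$. But the ``main obstacle'' you flag in your final paragraph is a genuine gap, and you have not resolved it. Every edge $\{a,b\}$ of the central copy of $H$ must support both a red and a blue copy of $H\setminus h$; for the red attachment the parts hosting $a$ and $b$ must be red-regular to parts of the red clique $R$, and for the blue attachment they must be blue-regular to parts of the blue clique $B$. Since $R \cap B = \{v_0\}$, you cannot place more than one central vertex in $R \cap B$, so at least one endpoint of every central edge lies outside one of the cliques and the required attachment in that colour is not available. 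No choice of $t$ repairs this.

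The paper's fix is to place \emph{all} $v(H)$ central vertices inside the single part $V_{v_0}$: one partitions $V_{v_0}$ into $v(H)$ equal subsets $X_1,\dots,X_{v(H)}$, and by Proposition~\ref{prop:slicing} each $X_i$ remains $(\eps',p)$-regular in both red and blue to (subsets of) every other part of either clique. With this done, the paper applies Theorem~\ref{thm:sparsecount} once to the entire graph $H \rrprod{2}(H,h)$, rather than separately to each attached copy as you suggest; this immediately yields $\Omega(\mu^{2e}n^{v})$ canonical copies in which the central $H_0$ is green-forced, and sidesteps the additional averaging you would otherwise need to intersect ``many red bases'' with ``many blue bases'' on each central pair. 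The $K$-dependent failure probability then comes from a second-moment calculation: one bounds $\sum_i Z_i^2$ (pairs of $H\rrprod{2}(H,h)$-copies sharing the same central $H_0$) by counting degenerate copies of $H\rrprod{4}(H,h)$ via Proposition~\ref{prop:fewsubgraphs}, and compares with the first moment to extract $\Omega(K^{-1}n^{v(H)})$ \emph{distinct} green-forced central copies.
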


\begin{proof}
For convenience, we write $v = v(H)$ and $e = e(H)$. Setting $t = e (v - 2) + 1$ and applying Corollary~\ref{cor:twocliques}, which holds with high probability, we find a vertex $x$ in the reduced graph $\Gamma$ that is in, say, both a red and a blue $K_t$.  Let $u_1, u_2, \hdots, u_{t-1}$ be the other vertices from the red clique and $w_1, w_2, \hdots, w_{t-1}$ be the other vertices from the blue clique.  Consider the corresponding parts in the graph $G$.  We know that, for all $1 \le i < j \le t-1$, the pairs $\gred[V_x, V_{u_i}], \gred[V_{u_i}, V_{u_j}], \gblue[V_x, V_{w_i}]$ and $\gblue[V_{w_i}, V_{w_j}]$ are all $(\eps, p)$-regular pairs of density at least $\alpha p$. This situation is illustrated below in the case $H = K_3$.

\begin{figure}[H] \label{fig:biclique}
    \centering

    \begin{tikzpicture}
        \foreach \a/\b in {120/180, 180/240, 240/120}
        {
            \draw[color=red!50, line width=0.3cm] (0:0)--(\a:2);
            \draw[color=red!50, line width=0.3cm] (\a:2)--(\b:2);
        }
        \foreach \a/\b in {300/0, 0/60, 60/300}
        {
            \draw[color=blue!50, line width=0.3cm] (0:0)--(\a:2);
            \draw[color=blue!50, line width=0.3cm] (\a:2)--(\b:2);
        }
        \draw[fill=white] (0:0) circle (0.3cm);
        \draw[color=red!50, fill=white] (120:2) circle (0.3cm);
        \draw[color=red!50, fill=white] (180:2) circle (0.3cm);
        \draw[color=red!50,fill=white] (240:2) circle (0.3cm);
        \draw[color=blue!50, fill=white] (300:2) circle (0.3cm);
        \draw[color=blue!50, fill=white] (0:2) circle (0.3cm);
        \draw[color=blue!50, fill=white] (60:2) circle (0.3cm);
    \end{tikzpicture}

    \caption{The parts of $G$ corresponding to the two cliques from Corollary~\ref{cor:twocliques}.}
\end{figure}
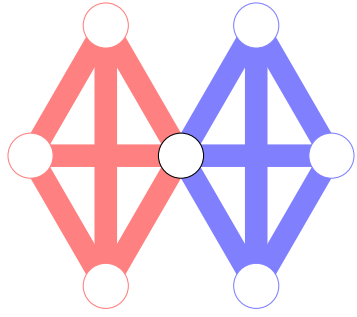

Partition the part $V_x$ into $v$ equal-sized subsets, $X_1, X_2, \hdots, X_v$, letting $N$ denote the size of these sets.  Define $\eta$ by $N = \eta n$, noting that $\eta \ge \frac{1-\eps}{kv}$, where we recall that $k \le T$ is the number of parts in the $(\eps,p)$-regular partition of $G$.  For each $i$, let $R_i \subset V_{u_i}$ and $B_i \subset V_{w_i}$ be arbitrary subsets of size $N$.  Let $\mc X = \{X_1, \hdots, X_v\}$, $\mc R = \{R_1, \hdots, R_{t-1} \}$ and $\mc B = \{ B_1, \hdots, B_{t-1} \}$.  By Proposition~\ref{prop:slicing}(i), it follows that the pairs $\gred[X_i, R_j], \gred[R_i, R_j], \gblue[X_i, B_j]$ and $\gblue[B_i, B_j]$ are all $(\eps v, p)$-regular of density at least $(\alpha - \eps)p$.

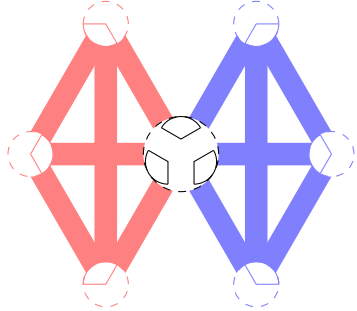
\begin{figure}[H] \label{fig:smallparts}
    \centering

    \begin{tikzpicture}
        \foreach \a/\b in {120/180, 180/240, 240/120}
        {
            \draw[color=red!50, line width=0.3cm] (0:0)--(\a:2);
            \draw[color=red!50, line width=0.3cm] (\a:2)--(\b:2);
        }
        \foreach \a/\b in {300/0, 0/60, 60/300}
        {
            \draw[color=blue!50, line width=0.3cm] (0:0)--(\a:2);
            \draw[color=blue!50, line width=0.3cm] (\a:2)--(\b:2);
        }
        
        \draw[color=red!50, style = dashed, fill = white] (120:2) circle (0.3cm);
        \begin{scope}[shift={(120:2)}]
            \draw[color=red!50] (180:0.3)--(0:0)--(300:0.3);
            \draw[color=red!50] (180:0.3) arc (180:300:0.3);
        \end{scope}
        
        \draw[color = red!50, style = dashed, fill = white] (180:2) circle (0.3cm);
        \begin{scope}[shift={(180:2)}]
            \draw[color = red!50] (300:0.3)--(0:0)--(60:0.3);
            \draw[color = red!50] (300:0.3) arc (300:420:0.3);
        \end{scope}
        
        \draw[color = red!50, style = dashed, fill = white] (240:2) circle (0.3cm);
        \begin{scope}[shift={(240:2)}]
            \draw[color = red!50] (60:0.3)--(0:0)--(180:0.3);
            \draw[color = red!50] (60:0.3) arc (60:180:0.3);
        \end{scope}
        
        \draw[color = blue!50, style = dashed, fill = white] (300:2) circle (0.3cm);
        \begin{scope}[shift={(300:2)}]
            \draw[color = blue!50] (0:0.3)--(0:0)--(120:0.3);
            \draw[color = blue!50] (0:0.3) arc (0:120:0.3);
        \end{scope}
        
        \draw[color = blue!50, style = dashed, fill = white] (0:2) circle (0.3cm);
        \begin{scope}[shift={(0:2)}]
            \draw[color = blue!50] (120:0.3)--(0:0)--(240:0.3);
            \draw[color = blue!50] (120:0.3) arc (120:240:0.3);
        \end{scope}
        
        \draw[color = blue!50, style = dashed, fill = white] (60:2) circle (0.3cm);
        \begin{scope}[shift={(60:2)}]
            \draw[color = blue!50] (240:0.3)--(0:0)--(0:0.3);
            \draw[color = blue!50] (240:0.3) arc (240:360:0.3);
        \end{scope}
        
        \draw[style = dashed, fill = white] (0:0) circle (0.5cm);
    
        \begin{scope}[shift={(90:0.2)}]
            \draw (30:0.3) -- (0:0) -- (150:0.3);
            \draw (30:0.3) arc (30:150:0.3);
        \end{scope}
        
        \begin{scope}[shift={(210:0.2)}]
            \draw (150:0.3) -- (0:0) -- (270:0.3);
            \draw (150:0.3) arc (150:270:0.3);
        \end{scope}
        
        \begin{scope}[shift={(330:0.2)}]
            \draw (270:0.3) -- (0:0) -- (30:0.3);
            \draw (270:0.3) arc (270:390:0.3);
        \end{scope}
    \end{tikzpicture}

    \caption{We divide the central part into $v(H)$ subsets and shrink the other parts accordingly.}
\end{figure}

Next consider the graph $H \rrprod{2} (H,h)$ and note that it has precisely $v + 2(t-1)$ vertices, with one central copy $H_0$ of $H$, whose edges are deleted, and each deleted edge $g \in E(H_0)$ supporting two otherwise vertex-disjoint copies $H_{g,1}$ and $H_{g,2}$ of $H$.  We can build a bijection $\psi : V(H \rrprod{2} (H,h)) \rightarrow \mc X \cup \mc R \cup \mc B$ such that:
\begin{itemize}
	\item $\psi(H_0) = \mc X$ and
	\item for all $g \in E(H_0)$, $\psi(V(H_{g,1}) \setminus g) \subset \mc R$ and $\psi(V(H_{g,2}) \setminus g) \subset \mc B$.
\end{itemize}
That is, for each edge $g \in E(H_0)$, we send one of the attached copies of $H$ to the red parts $\mc R$ and the other copy to the blue parts $\mc B$.

\begin{figure}[H] \label{fig:forcedparts}
    \centering

    \begin{tikzpicture}
        \foreach \a/\b in {60/90,60/330,180/90,180/210,300/210,300/330}
            \draw[color = red!50, line width = 0.2cm] (\a:2) -- (\b:0.5);
            
        \foreach \a/\b in {0/90,0/330,120/90,120/210,240/210,240/330}
            \draw[color = blue!50, line width = 0.2cm] (\a:2) -- (\b:0.5);
            
        \foreach \a in {90,210,330}
            \draw[color = white, fill = white] (\a:0.5) circle (0.3cm);
            
        \foreach \a in {0,60,120,180,240,300}
            \draw[color = white, fill = white] (\a:2) circle (0.3cm);
            
        \begin{scope}[shift={(150:0.15)}]
            \draw[color = gray!25, line width = 0.3cm, style = dashed] (90:0.5) -- (210:0.5);
        \end{scope}
        
        \begin{scope}[shift={(270:0.15)}]
            \draw[color = gray!25, line width = 0.3cm, style = dashed] (210:0.5) -- (330:0.5);
        \end{scope}
        
        \begin{scope}[shift={(30:0.15)}]
            \draw[color = gray!25, line width = 0.3cm, style = dashed] (330:0.5) -- (90:0.5);
        \end{scope}
        
        \begin{scope}[shift={(60:2)}]
            \draw[color = red!50] (180:0.3) -- (0:0) -- (300:0.3);
            \draw[color = red!50] (180:0.3) arc (180:300:0.3);
        \end{scope}
        
        \begin{scope}[shift={(180:2)}]
            \draw[color = red!50] (300:0.3) -- (0:0) -- (60:0.3);
            \draw[color = red!50] (300:0.3) arc (300:420:0.3);
        \end{scope}
        
        \begin{scope}[shift={(300:2)}]
            \draw[color = red!50] (60:0.3) -- (0:0) -- (180:0.3);
            \draw[color = red!50] (60:0.3) arc (60:180:0.3);
        \end{scope}
        
        \begin{scope}[shift={(0:2)}]
            \draw[color = blue!50] (120:0.3) -- (0:0) -- (240:0.3);
            \draw[color = blue!50] (120:0.3) arc (120:240:0.3);
        \end{scope}
        
        \begin{scope}[shift={(120:2)}]
            \draw[color = blue!50] (240:0.3) -- (0:0) -- (0:0.3);
            \draw[color = blue!50] (240:0.3) arc (240:360:0.3);
        \end{scope}
        
        \begin{scope}[shift={(240:2)}]
            \draw[color = blue!50] (0:0.3) -- (0:0) -- (120:0.3);
            \draw[color = blue!50] (0:0.3) arc (0:120:0.3);
        \end{scope}
        
        \begin{scope}[shift={(90:0.5)}]
            \draw (30:0.3) -- (0:0) -- (150:0.3);
            \draw (30:0.3) arc (30:150:0.3);
        \end{scope}
        
        \begin{scope}[shift={(210:0.5)}]
            \draw (150:0.3) -- (0:0) -- (270:0.3);
            \draw (150:0.3) arc (150:270:0.3);
        \end{scope}
        
        \begin{scope}[shift={(330:0.5)}]
            \draw (270:0.3) -- (0:0) -- (30:0.3);
            \draw (270:0.3) arc (270:390:0.3);
        \end{scope}
    \end{tikzpicture}

    \caption{We imagine a copy of $H$ between the subsets of the central part, with each edge supporting both a red and a blue copy of $H \setminus h$ using the parts from the red and blue cliques.}
\end{figure}
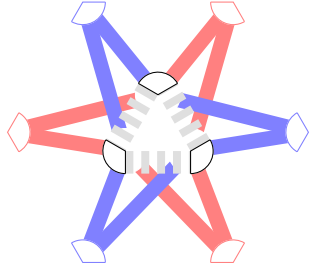

Let $m = \tfrac12 \alpha p N^2$ and consider an edge $f = \{y,z\} \in E(H \rrprod{2} (H,h))$.  If $f \in E(H_{g,1})$ for some $g \in E(H_0)$, then the pair $\gred[\psi(y), \psi(z)]$ is an $(\eps v, p)$-regular pair of density at least $(\alpha - \eps) p$.  Define $\psi(f) \subseteq \gred[\psi(y), \psi(z)]$ to be the subgraph obtained from this pair by choosing $m$ edges uniformly at random.  By Proposition~\ref{prop:slicing}, $\psi(f)$ is $(2 \eps v, p)$-regular with high probability. Otherwise, $f \in E(H_{g,2})$ for some $g \in E(H_0)$, in which case we define $\psi(f)$ to be the subgraph obtained by selecting $m$ edges uniformly at random from $\gblue[\psi(y), \psi(z)]$.  We again have, with high probability, that $\psi(f)$ is $(2 \eps v, p)$-regular.

Now define the subgraph $G' \subset G$ to be the union of all these subgraphs $\psi(f)$, that is, 
\[ V(G') = \bigcup_{y \in V(H \rrprod{2} (H,h))} \psi(y) \quad \textrm{and} \quad E(G') = \bigcup_{f \in E(H \rrprod{2} (H,h))} \psi(f). \]
From the above discussion, it is clear that $G' \in \mc G(H \rrprod{2} (H,h), N, m, p, 2 \eps v)$, where this family of graphs is as defined before Theorem~\ref{thm:sparsecount}.  Since $p = cn^{-1/m_2(H)}$ and, by Lemma~\ref{lem:productdensity}(c), $m_2(H \rrprod{2} (H,h)) < m_2(H)$, we can apply Theorem~\ref{thm:sparsecount}.  This gives some constant $\xi > 0$ such that there are with high probability at least $\xi N^{v(H \rrprod{2} (H,h))} \left( \tfrac{m}{N^2} \right)^{e(H \rrprod{2} (H,h))}$ copies of $H \rrprod{2} (H,h)$ in $G'$, where each vertex $y$ comes from the set $\psi(y)$.  To simplify this expression, we define $c' = \xi \eta^{v(H \rrprod{2} (H,h))} \left( \tfrac12 \alpha \right)^{e(H \rrprod{2} (H,h))}$ and $\mu = n^{v - 2} p^{e - 1}$.  Our lower bound on the number of copies of $H \rrprod{2} (H,h)$ can then be written as $c' \mu^{2e} n^v$.  Note that $c' > 0$ is a constant, while, since $p = cn^{-1/m_2(H)} \ge cn^{-(v-2)/(e-1)}$, $\mu = \Omega(1)$.  

In each such copy of $H \rrprod{2} (H,h)$, each missing edge $g \in E(H_0)$ in the central copy of $H$ supports both a red copy $H_{g,1}$ of $H \setminus h$ and a blue copy $H_{g,2}$ of $H \setminus h$.  In particular, this means $g$ is green-forced and, as this holds for all edges $g$, this shows that the central copy $H_0$ forms a green-forced copy of $H$ in $\mc K_n(H)$.

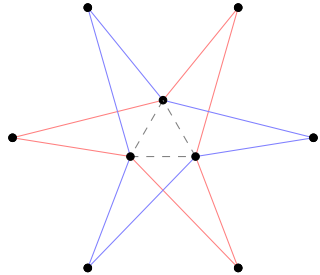
\begin{figure}[H]\label{fig:colorforcing}
    \centering
    
    \begin{tikzpicture}
        \foreach \a/\b in {60/90,60/330,180/90,180/210,300/210,300/330}
        {
            \draw[color = red!50] (\a:2) -- (\b:0.5);
            \draw[fill = black] (\a:2) circle (0.05cm);
        }
            
        \foreach \a/\b in {0/90,0/330,120/90,120/210,240/210,240/330}
        {
            \draw[color = blue!50] (\a:2) -- (\b:0.5);
            \draw[fill = black] (\a:2) circle (0.05cm);
        }
            
        \foreach \a/\b in {90/210,210/330,330/90}
        {
            \draw[color = gray, style = dashed] (\a:0.5) -- (\b:0.5);
            \draw[fill = black] (\a:0.5) circle (0.05cm);
        }
    \end{tikzpicture}
    
    \caption{Applying Theorem~\ref{thm:sparsecount} gives many copies of $H \rrprod{2} (H,h)$ in which the central copy of $H$ is green-forced.}
\end{figure}

However, we are not quite done, as these green-forced copies of $H$ may contribute to multiple copies of $H \rrprod{2} (H,h)$, in which case they will have been overcounted.  To rectify this, and complete the proof, we now show that most of these copies of $H$ are not counted too often.

To this end, suppose we have found $r$ distinct green-forced central copies $H_0$ of $H$ above and enumerate them as $H^{(1)}, H^{(2)}, \hdots, H^{(r)}$.  For each $1 \le i \le r$, let $Z_i$ denote the number of copies of $H \rrprod{2} (H,h)$ found above in which $H^{(i)}$ is the central copy $H_0$.  We have thus far established that
\begin{equation} \label{ineq:firstmoment}
	\sum_{i=1}^r Z_i \ge c' \mu^{2e} n^v,
\end{equation}
while we wish to show that $r \ge \tfrac{\zeta}{K} n^v$.

Now consider the quantity $Z_i^2$.  This counts the number of ordered pairs $(A,B)$ of canonical copies of $H \rrprod{2} (H,h)$ with $H^{(i)}$ as the central copy $H_0$.  Given such a pair, let $J = A \cup B$.  In $J$, each edge $g \in H_0$ is contained in a red copy of $H \setminus h$ from $A$ and one from $B$ as well.  The same holds true for the blue copies of $H \setminus h$. These attached copies of $H \setminus h$ in $J$ are mostly disjoint outside the central $H_0$, except that the two copies of the same colour supported on the same edge $g$ may share some vertices.  We consider such a graph $J$ as a degenerate copy of $H \rrprod{4} (H,h)$.  There are several isomorphism classes $J$ could belong to, depending on which vertices are shared by $A$ and $B$.

For each edge $g \in H_0$, let $F_{g,1}$ be the subgraph of $H$ induced by the vertices shared between the red copies of $H \setminus h$ in $A$ and $B$ supported on $g$ and define $F_{g,2}$ analogously for the blue copies.  We include the edge $g$ in $F_{g,1}$ and $F_{g,2}$, even though it does not appear in $J$.  Note that the union $\cup_{g \in E(H_0)} \cup_{j=1}^2 F_{g,j}$ determines the isomorphism class of $J$.  Hence, there are at most $2^{v(H \rrprod{2} (H,h))}$ possible isomorphism types, as for each vertex in $H \rrprod{2} (H,h)$, we can decide whether or not it belongs to the corresponding $F_{g,j}$.  Set $\kappa = 2^{v(H \rrprod{2} (H,h))}$.

We shall use Proposition~\ref{prop:fewsubgraphs} to show that, regardless of isomorphism type, there cannot be many copies of $J$ in $G$.  Indeed, we have 
\[ v(J) = v(H) + \sum_{g \in E(H_0)} \sum_{j=1}^2 \left(2 (v(H) - 2) - (v(F_{g,j}) - 2) \right) = v + 4e(v-2) - \sum_{g,j} (v(F_{g,j}) - 2) \]
and
\[ e(J) = \sum_{g \in E(H_0)} \sum_{j=1}^2 \left(2 (e(H) - 1) - (e(F_{g,j}) - 1) \right) = 4e (e-1) - \sum_{g,j} (e(F_{g,j}) - 1). \]
This gives
\[ n^{v(J)} p^{e(J)} = n^{v + 4e(v-2) - \sum_{g,j} (v(F_{g,j}) - 2)} p^{4e(e-1) - \sum_{g,j} (e(F_{g,j}) - 1)} = \frac{\mu^{4e} n^v}{\prod_{g,j} \left( n^{v(F_{g,j}) - 2} p^{e(F_{g,j}) - 1} \right) }. \]
Since $F_{g,j} \subseteq H$ and $p = cn^{-1/m_2(H)}$, we have $n^{v(F_{g,j}) - 2} p^{e(F_{g,j}) - 1} \ge c^{e(F_{g,j}) - 1}$ for all $g,j$.  Thus, $n^{v(J)} p^{e(J)} \le c^{-2e^2} \mu^{4e} n^v$.  Hence, by Proposition~\ref{prop:fewsubgraphs}, with probability at least $1 - K^{-1}$ there are at most $K c^{-2e^2} \mu^{4e} n^v $ copies of $J$ in $G$.  Taking a union bound over all isomorphism classes, we find that with probability at least $1 - \kappa K^{-1}$, there are at most $\kappa K c^{-2e^2} \mu^{4e} n^v$ of these degenerate copies of $H \rrprod{4} (H,h)$ in $G$.

We noted earlier that each pair $(A,B)$ of copies of $H \rrprod{2} (H,h)$ counted by $\sum_i Z_i^2$ gives rise to a degenerate copy $J = A \cup B$ of $H \rrprod{4} (H,h)$.  To reverse the correspondence, for each vertex in $H \rrprod{4} (H,h) \setminus (A \cap B)$, we must decide how to assign the corresponding vertices of $J$ to $A$ and $B$.  Thus, there are at most $\kappa^2 \geq 2^{v(H \rrprod{4} (H,h))}$ pairs $(A,B)$ giving rise to the same $J = A \cup B$.

Putting all this together, we have, with probability at least $1 - \kappa K^{-1}$,
\[ \sum_{i=1}^r Z_i^2 \le \kappa^3 K c^{-2e^2} \mu^{4e} n^v. \]

Define $I = \{ i : Z_i \ge \tfrac{2}{c'} \cdot \kappa^3 K c^{-2e^2} \mu^{2e} \}$.  It then follows from the above inequality that $\sum_{i \in I} Z_i \le \tfrac12 c' \mu^{2e} n^v$.  Plugging this into~\eqref{ineq:firstmoment}, we obtain $\sum_{i \notin I} Z_i \ge \tfrac12 c' \mu^{2e} n^v$.  As there are at most $r$ summands, each of which has size less than $\tfrac{2}{c'} \cdot \kappa^3 K c^{-2e^2} \mu^{2e}$, we can conclude that
\[ r \ge \left( \frac{(c')^2 c^{2e^2}}{4 \kappa^3 K} \right) n^v. \]
Setting $\zeta = \tfrac{1}{4} (c')^2 c^{2e^2} \kappa^{-3}$ completes the proof.
\end{proof}

\subsection{Finishing the proof}

We begin with part (a).  Suppose we have a monochromatic-$H$-free $2$-edge-colouring $\varphi_2$ of $G$ and $q_2 = \omega_n n^{-2}$ for some $\omega_n \rightarrow \infty$.  Set $K = \omega_n^{1/2}$.  By Proposition~\ref{prop:colourforced}, with probability $1 - \kappa K^{-1} - o(1) = 1 - o(1)$, there is some colour $\chi$ such that there are at least $\zeta K^{-1} n^{v(H)}$ $\chi$-forced copies of $H$.  As the colouring $\varphi_2$ only has red and blue edges, the colour $\chi$ must be green.

Each edge can be in at most $n^{v(H)-2}$ green-forced copies of $H$, so there must be at least $\zeta K^{-1} n^2$ green-forced edges.  If any of these edges were to appear in $G_{n,q_2}$, we would not be able to extend the colouring $\varphi_2$, as colouring the edge red or blue creates a monochromatic copy of $H$.  Hence, the probability that $\varphi_2$ extends to $G \cup G_{n,q_2}$ is at most
\[ \left( 1 - q_2 \right)^{\zeta K^{-1} n^2} \le \exp \left( - \zeta K^{-1} n^2 q_2 \right) = \exp \left( - \zeta K \right) = o(1), \]
as required.

Part (b) follows the same lines.  We begin as before: given the colouring $\varphi_3$ and some $q_3 = \omega_n n^{-1/m(H)}$, where $\omega_n \rightarrow \infty$, we set $K = \omega_n^{1/2}$.  By Proposition~\ref{prop:colourforced}, with probability $1 - o(1)$, there is some colour $\chi$ with at least $\zeta K^{-1} n^{v(H)}$ $\chi$-forced copies of $H$.

If any $\chi$-forced copy of $H$ appears in $G_{n,q_3}$, then $\varphi_3$ cannot be extended. Indeed, colouring all of its edges with the colour $\chi$ clearly creates a monochromatic copy of $H$, but since all the edges are $\chi$-forced, using any other colour on an edge also completes a monochromatic copy.  By Proposition~\ref{prop:manysubgraphs}, the probability that none of the $\chi$-forced copies of $H$ appear in $G_{n,q_3}$ is at most
\[ \frac{v(H)! 2^{v(H)} K}{\zeta \omega_n} = \frac{v(H)! 2^{v(H)}}{ \zeta K} = o(1), \]
as desired.  This completes the proof of Theorem~\ref{thm:main}.
 
\section{Concluding remarks}

Our investigations point to several open problems, perhaps the most interesting of which is to classify all graphs $H$ for which Theorem~\ref{thm:main} holds. We have shown that our condition, that there exists an edge $h$ such that $m_2(H \setminus h) < m_2(H)$, cannot be entirely dispensed with. However, there are also examples of graphs which do not satisfy this condition, but still satisfy some of the conclusions of Theorem~\ref{thm:main}.

Indeed, our proof of Theorem~\ref{thm:main}(a) readily generalises to the following statement.

\begin{theorem} \label{thm:general}
    Given a graph $H$, suppose there are graphs $\fred$, $\fblue$ and a matching $M$ such that
    \begin{itemize}
        \item[(i)] $V(\fred) \cap V(\fblue) = V(M)$, with $V(M)$ forming an independent set in $\fred$ and $\fblue$,
        \item[(ii)] $m_2(H) > m_2(\fred \cup \fblue)$,
        \item[(iii)] $m_2(H) \ge \frac{e(J)}{v(J) - v(M)}$ for all $J \subseteq \fred \cup \fblue$ with $V(M) \subset V(J)$ and $e(J) \ge 1$ and
        \item[(iv)] for any partition of the matching $M = \mred \cup \mblue$, $H$ is a subgraph of $\fred \cup \mred$ or $\fblue \cup \mblue$.
    \end{itemize}
    Let $c > 0$ be fixed and, for $p = c n^{-1/m_2(H)}$, let $G = G_{n,p}$.  Then, with high probability, the following holds.  Let $\varphi$ be an arbitrary monochromatic-$H$-free $2$-edge-colouring of $G$. If $q = \omega(n^{-2})$, then, with high probability, $\varphi$ cannot be extended to a monochromatic-$H$-free $2$-edge-colouring of $G \cup G_{n,q}$.
\end{theorem}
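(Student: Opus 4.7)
The plan is to follow the proof of Theorem~\ref{thm:main}(a) almost verbatim, with the edge-rooted product $H \rrprod{2}(H,h)$ used in Proposition~\ref{prop:colourforced} replaced throughout by the graph $\fred \cup \fblue$. The four hypotheses each take over one role from the original argument: (i) ensures $\fred \cup \fblue$ has the ``disjoint red and blue attachments glued along a base $V(M)$'' shape we will search for; (ii) $m_2(\fred\cup\fblue) < m_2(H)$ is exactly what is needed to invoke the sparse counting lemma at density $p$; (iii) replaces the bound ``$F \subseteq H \Rightarrow e(F)-1 \le m_2(H)(v(F)-2)$'' used in the second-moment step of Proposition~\ref{prop:colourforced}; and (iv) replaces the observation that a green-forced edge admits no valid colour.

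After applying Theorem~\ref{thm:sparsereg} and building the reduced multigraph $\Gamma = \Gred \cup \Gblue$, a Ramsey-type iteration analogous to Corollary~\ref{cor:twocliques} (which suffices since $v(\fred)$ and $v(\fblue)$ are constants) produces $v(M)$ common ``base'' vertices $x_1,\dots,x_{v(M)}$ of $\Gamma$ together with further vertices $r_i$ and $b_j$ so that every pair playing the role of an $\fred$-edge lies in $\Gred$ and every pair playing the role of an $\fblue$-edge lies in $\Gblue$; by (i), no edges are required on $\{x_1,\dots,x_{v(M)}\}$. Allocating one reduced-graph part $V_{x_i}$ per vertex of $V(M)$ and passing, via Proposition~\ref{prop:slicing}, to balanced subsets of the remaining parts, we obtain a coloured blow-up in $\mc G(\fred\cup\fblue, N, m, p, O(\eps))$. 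By (ii), Theorem~\ref{thm:sparsecount} applies and produces $\Omega\bigl(n^{v(\fred)+v(\fblue)-v(M)}\bigr)$ correctly coloured canonical copies of $\fred\cup\fblue$ in $G$, each designating a \emph{forced base}: a labelled $V(M)$ in $V(G)$ with a red $\fred$ and a blue $\fblue$ attached along it.

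These copies are heavily overcounted, so, mirroring the last part of Proposition~\ref{prop:colourforced}, a second-moment argument is required to extract $\Omega(K^{-1}n^{v(M)})$ distinct forced bases. Pairs of $(\fred\cup\fblue)$-copies sharing a base and overlapping in a subgraph $J \subseteq \fred\cup\fblue$ with $V(M) \subseteq V(J)$ contribute a term proportional to $n^{v(J)-v(M)}p^{e(J)} = c^{e(J)} n^{(v(J)-v(M))-e(J)/m_2(H)}$; condition (iii) states precisely that the exponent of $n$ is non-negative, so each such expression is bounded below by the constant $c^{e(J)}$. Since these quantities appear in the denominator of the second-moment ratio (exactly as $n^{v(F_{g,j})-2}p^{e(F_{g,j})-1}$ did in Proposition~\ref{prop:colourforced}), we obtain the needed upper bound on the overcount via Proposition~\ref{prop:fewsubgraphs} and Cauchy--Schwarz.

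Finally, each forced base is a labelled $v(M)$-vertex set carrying a prescribed copy of the matching $M$, yielding a family of $\Omega(K^{-1}n^{v(M)})$ candidate $M$-matchings in $K_n$. Since $m(M)=1/2$ and $q = \omega(n^{-2}) = \omega(n^{-1/m(M)})$, Proposition~\ref{prop:manysubgraphs} shows that $G_{n,q}$ contains some matching from this family with high probability; condition (iv) then ensures that any $2$-colouring of this matching forces a monochromatic $H$ in $G \cup G_{n,q}$. The main obstacle is the bookkeeping in the second-moment step: one must verify that \emph{every} overlap pattern arising between two coloured copies of $\fred \cup \fblue$ sharing a base is controlled by condition (iii), which is a direct but slightly intricate analogue of how Lemma~\ref{lem:productdensity}(c) drove the original argument.
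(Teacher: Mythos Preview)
Your proposal is correct and matches the paper's intended route (the paper says only that the proof of Theorem~\ref{thm:main}(a) ``readily generalises'', and your mapping of hypotheses (i)--(iv) onto the steps of that proof is exactly right). One simplification: rather than seeking $v(M)$ distinct reduced-graph base parts $V_{x_1},\dots,V_{x_{v(M)}}$, which would require a genuine strengthening of Corollary~\ref{cor:twocliques}, you can follow the original proof literally --- take the \emph{single} part $V_{v_0}$ produced by Corollary~\ref{cor:twocliques} (with cliques of size at least $\max(v(\fred),v(\fblue))-v(M)+1$), split it into $v(M)$ equal pieces via Proposition~\ref{prop:slicing}(i) (hypothesis (i) guarantees no edges are needed among these pieces), and assign the remaining vertices of $\fred$ and $\fblue$ to parts from the red and blue cliques respectively.
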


One of the simplest examples satisfying the conditions of Theorem~\ref{thm:general} is the graph $H$ consisting of two triangles joined by a path of length $\ell \geq 2$. In this case we can take $M$ to have size three with the corresponding graphs $\fred$ and $\fblue$ depicted below.

This example shows that the condition in Theorem~\ref{thm:main} is not best possible, as Theorem~\ref{thm:general} applies to a wider class of graphs.  However, there is a subtle trade-off in finding appropriate forcing structures $\fred \cup \fblue$ for Theorem~\ref{thm:general} --- we need them to be sparse enough to satisfy (ii) and (iii), but to have enough copies of $H$ for (iv).

\begin{figure}[H] \label{fig:blah}
    \centering
    \begin{tikzpicture}[scale=1.2,every node/.style={circle,draw,color=black,fill=black,inner sep=0pt,minimum width=3pt}]
       \foreach \i in {1,2}
        {
        \node (a\i) at (1-3*\i,-.35) {};
        \node (b\i) at (0-3*\i,-.35) {};
        \node (c\i) at (.5-3*\i,.35) {};
        
        \draw (a\i)--(b\i);
        \draw (b\i)--(c\i);
        \draw (c\i)--(a\i);
        }
        
        \draw (c2) to [out=-15,in=-165] (c1);
        \node[draw=none,fill=none] at (-4,.3) {$\ell$};
        
       \foreach \i in {0,1,2}
        {
         \node (u\i) at (0+3*\i,0) {};
         \node (v\i) at (1+3*\i,0) {};
         \node (x\i) at (0.5+3*\i,.7) {};
         \node (y\i) at (0.5+3*\i,-.7) {};
         
         \draw[dashed] (u\i)--(v\i);
         \draw[color=red] (u\i)--(x\i);
         \draw[color=red] (v\i)--(x\i);
         \draw[color=blue] (u\i)--(y\i);
         \draw[color=blue] (v\i)--(y\i);
         }
         
    \draw[color=red] (x0) to [out=-15,in=-165] (x1);
    \node[draw=none,fill=none] at (5,.65) {$\ell$};
    \draw[color=red] (x1) to [out=-15,in=-165] (x2);
    \node[draw=none,fill=none] at (2,.65) {$\ell$};
    \draw[color=blue] (y0) to [out=15,in=165] (y1);
    \node[draw=none,fill=none] at (5,-.65) {$\ell$};
    \draw[color=blue] (y1) to [out=15,in=165] (y2);
    \node[draw=none,fill=none] at (2,-.65) {$\ell$};

    \draw[color=red] (x0) to [out=30,in=150] (x2);
    \node[draw=none,fill=none] at (3.5,1.4) {$\ell$};
    \draw[color=blue] (y0) to [out=-30,in=-150](y2);
    \node[draw=none,fill=none] at (3.5,-1.4) {$\ell$};

    \end{tikzpicture}
    \caption{$H$, drawn on the left, consists of two triangles joined by a path of length $\ell$. On the right, $\fred$ is drawn in red, $\fblue$ in blue and the matching $M$ is drawn with dashed lines.}
\end{figure}
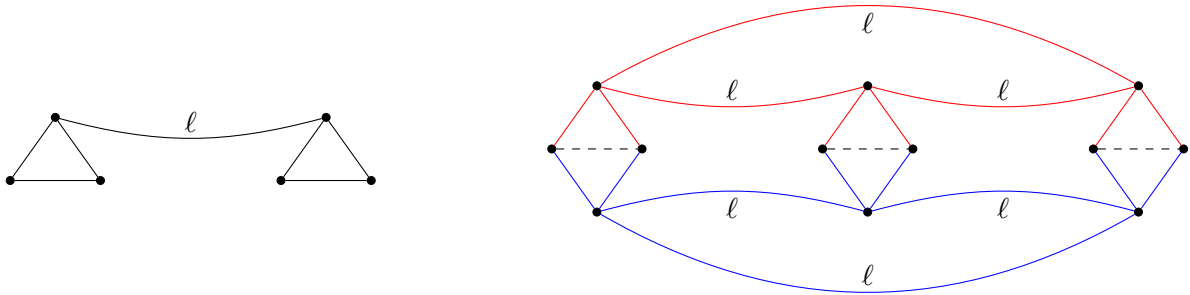

\medskip

\noindent
\textbf{Acknowledgements.} Part of this work was carried out while the third author visited the second and fourth authors at FU Berlin and he is grateful for their hospitality.

\end{document}